\newtheorem{lemma}{Lemma}[section]
\newtheorem{theorem}[lemma]{Theorem}
\newtheorem*{theorem*}{Theorem}
\newtheorem{corollary}[lemma]{Corollary}
\newtheorem{proposition}[lemma]{Proposition}
\newtheorem*{proposition*}{Proposition}
\theoremstyle{remark}
\theoremstyle{definition}
\newtheorem*{definition*}{Definition}
\newtheorem*{conjecture*}{Conjecture}
\newtheorem*{remark*}{Remark}
\newtheorem*{remarks*}{Remarks}
\newtheorem*{claim*}{Claim}
\begin{document}

\title[Pointwise multiple averages for sublinear functions]{Pointwise multiple averages for sublinear functions}

\author{Sebasti{\'a}n Donoso, Andreas Koutsogiannis and Wenbo Sun}
\address[Sebasti{\'a}n Donoso]{Universidad de O'Higgins, Instituto de Ciencias de la Ingenier\'ia, Rancagua, Chile}
\email{sebastian.donoso@uoh.cl}
\address[Andreas Koutsogiannis]{The Ohio State University, Department of mathematics, Columbus, Ohio, USA} \email{koutsogiannis.1@osu.edu}
\address[Wenbo Sun]{The Ohio State University, Department of mathematics, Columbus, Ohio, USA} \email{sun.1991@osu.edu}

\thanks{The first author is supported by Fondecyt Iniciaci\'on en Investigaci\'on Grant 11160061.}

\begin{abstract} 
For any  measure preserving system $(X,\mathcal{B},\mu,T_1,\ldots,T_d),$ where we assume no commutativity on the transformations $T_i,$ $1\leq i\leq d,$ we study the pointwise convergence of multiple ergodic averages with iterates of different growth coming from a large class of sublinear functions. This class properly contains important subclasses of Hardy field functions of order $0$ and of Fej{\'e}r functions, i.e., tempered functions of order $0.$ We show that the convergence of the single average, via an invariant property, implies the convergence of the multiple one. 
We also provide examples of sublinear functions which are in general bad for convergence on arbitrary systems, but they are good for  uniquely ergodic systems. The case where the fastest function is linear is addressed as well, and we provide, in all the cases, an explicit formula of the limit function.
\end{abstract}

\subjclass[2010]{Primary: 37A05; Secondary: 37A30, 28A99 }

\keywords{Pointwise convergence, multiple averages, sublinear functions, Fej{\'e}r functions, Hardy functions}

\maketitle

\section{Introduction and main results}\label{se:1}

 The study of the limiting behavior, in $L^2(\mu)$ or pointwise, as $N\to\infty,$ of  {\it multiple ergodic averages} of the form 
\begin{equation}\label{E:central}
\frac{1}{N}\sum_{n=0}^{N-1}T_1^{a_1(n)}f_1 \cdot\ldots\cdot T_d^{a_d(n)}f_d, \end{equation}
where $T_1,\ldots,T_d:X\to X$ are invertible (usually commuting) measure preserving transformations acting on a probability space $(X,\mathcal{B},\mu)$; $f_1,\ldots,f_d\in L^\infty(\mu)$ and $a_i(\cdot)$ are suitable functions taking integer values on integers for all $1\leq i\leq d,$\footnote{For a measurable function $f$ and a transformation $T\colon X\to X$,  $Tf$ denotes the composition $f\circ T$. 
} 
 is a central problem in ergodic theory. With partial knowledge of the limiting behavior of (\ref{E:central}), for the case $T_i=T$ and $a_i(n)=in$, Furstenberg provided a purely ergodic theoretical proof of Szemer{\'e}di's
theorem (\cite{Fu}), i.e., every subset of $\mathbb{N}$ with positive upper density contains arbitrary long arithmetic progressions. 

In recent years, motivated by the work of Furstenberg,
 fruitful progress has been made towards the study of the existence and also of the exact value of the $L^{2}(\mu)$ limit of (\ref{E:central}) for various classes of functions $a_i$. For the existence of the limit, we refer the readers to \cite{Aus, HK05, H, Tao, Walsh12}. As for the explicit expression of the limit, the first result is von Neumann's mean ergodic theorem which says that for $d=1$ and $a_1(n)=n$ the limit of \eqref{E:central} is $\mathbb{E}(f_{1}\vert\mathcal{I}(T_{1}))$, where $\mathcal{I}(T)$ denotes the $\sigma$-algebra of $T$-invariant sets and $\mathbb{E}(f\vert\mathcal{I}(T))$ is the \emph{conditional expectation} of $f$ with respect to $\mathcal{I}(T).$ The classes of integer polynomial, integer parts of real polynomial, Hardy field  (see definition in $\S~2$) and more generally tempered classes of functions are also studied in \cite{Be87a, BK, F2} for a single $T$ and in \cite{CFH, F3, K} for commuting $T_i$'s.

 We stress out a result due to Frantzikinakis which is closely related to our study.  In \cite[Theorem~2.7]{F2}, under no commutativity assumption on $T_i$'s, for integer part of functions $a_i$ in $\mathcal{LE},$\footnote{$a$ is a \emph{logarithmico-exponential Hardy field function} if it belongs to a Hardy field of real valued functions and it's defined on some $(c,+\infty),$ $c\geq 0,$ by a finite combination of symbols $+, -, \times, \div, \sqrt[n]{\cdot}, \exp, \log$ acting on the real variable $x$ and on real constants (for more on Hardy field functions and in particular for logarithmico-exponential ones check for example \cite{F2, F3}).} with $x^\varepsilon\prec a_d\prec\ldots\prec a_1\prec x,$\footnote{For two functions $a,$ $b$ we write $a(x)\prec b(x),$ or just $a\prec b$ if $a(x)/b(x)\to 0 $  as $x\to\infty.$} for some $\varepsilon>0,$ he showed  that the limit of \eqref{E:central} in $L^{2}(\mu)$ is equal to $ \mathbb{E}(f_1|\mathcal{I}(T_1))\cdot\ldots\cdot \mathbb{E}(f_d|\mathcal{I}(T_d)).$

\medskip

On the other hand, the general problem of pointwise convergence of multiple ergodic averages remains widely open. For the existence and explicit expression of the pointwise limit of (\ref{E:central}) very few things are known. Even the $d=1$ case is not completely understood (for some results see \cite{BKQW,Bo2}).  
For $d\geq 2$,  Bourgain  showed (in \cite{Bo}) that the pointwise limit of (\ref{E:central}) exists when $T_1=T_2$ and $a_1(n)=an$, $a_2(n)=bn$ for $a,b\in \mathbb{Z}$. 
Recently, Huang, Shao and Ye (\cite{HSY}) showed the existence of the pointwise limit of \eqref{E:central} for $T_{i}=T, a_{i}(n)=in$ under the assumption that $T$ is a distal transformation (see also \cite{GHSY} for some particular weakly mixing systems). This result was extended in \cite{DS0} to two commuting transformations generating a distal action, and for an arbitrary number of commuting transformations in \cite{DS}  (also for a distal system and linear iterates). Note that when we deal with multiple $T_i$'s we have to impose additional \emph{distinctness} conditions on the $a_i$'s, since in general we don't expect \eqref{E:central} to have a nice behavior (for the case of $d=2$ and $a_1(n)=a_2(n)=n$ see a counterexample in \cite[Section~4]{BL}  when $T_1,$ $T_2$ generate a solvable group).

\medskip

In this paper, we study the pointwise convergence of \eqref{E:central} for integer part  (denoted with $[\cdot]$) of sequences of functions of different growth rate which are at most linear. More specifically, for a wide class  of sublinear functions, $\mathcal{S^{\ast}},$ we show (see next subsection for notation):

\begin{theorem}\label{C:main}  Let $d\in \mathbb{N}$ and $(X_i,\mu_i,T_i),$ $1\leq i\leq d$ be measure preserving systems. Let $a_i\in \mathcal{S^{\ast}},$ $1\leq i\leq d$ with $a_d\prec \ldots\prec a_1$ and $a'_d\prec\ldots\prec a'_1,$ and $\nu$ be any coupling of the spaces $(X_i,\mu_i)$. 
Then the averages \[ \frac{1}{N}\sum_{n=0}^{N-1} T_{1}^{[a_1(n)]}f_1(x_1) \cdot\ldots \cdot  T_{d}^{[a_d(n)]}f_d(x_d) \] converge as $N\to\infty$ for $\nu$-a.e. $(x_1,\ldots,x_d)\in X_1\times\cdots\times X_d$ to  \[ \mathbb{E}(f_{1}\vert\mathcal{I}(T_{1}))(x_1)\cdot\ldots\cdot\mathbb{E}(f_{d}\vert\mathcal{I}(T_{d}))(x_d).\]

\noindent In particular, if $(X_i,\mu_i)=(X,\mu),$ $1\leq i \leq d$, we have that \[ \frac{1}{N}\sum_{n=0}^{N-1} T_{1}^{[a_1(n)]}f_1(x) \cdot\ldots \cdot  T_{d}^{[a_d(n)]}f_d(x) \] converge as $N\to\infty$ for $\mu$-a.e. $x\in X$ to  \[ \mathbb{E}(f_{1}\vert\mathcal{I}(T_{1}))(x)\cdot\ldots\cdot\mathbb{E}(f_{d}\vert\mathcal{I}(T_{d}))(x).\] 
\end{theorem}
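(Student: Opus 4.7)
The plan is to prove Theorem~\ref{C:main} by induction on $d$, using as base case the pointwise convergence of the single average $\frac{1}{N}\sum_{n=0}^{N-1}T^{[a(n)]}f\to\mathbb{E}(f\vert\mathcal{I}(T))$ for $a\in\mathcal{S}^{\ast}$, which I take as already established earlier in the paper. The inductive step will be carried out by partitioning the averaging interval according to the level sets of the \emph{slowest} iterate $[a_{d}(n)]$. The role of the ``invariant property'' alluded to in the abstract is that $\mathcal{I}(T_{i})$-measurable limits are a.e.~$T_{i}$-invariant and hence insensitive to the shifts of the averaging window that the partitioning produces.

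For the inductive step with $d\ge 2$, fix bounded $g_{1},\dots,g_{d}$ and define $L_{k}=\{0\le n<N:[a_{d}(n)]=k\}$. Since $a_{d}$ is (eventually) monotone, sublinear and has the slowest derivative, each $L_{k}$ is an interval whose length is of order $1/a_{d}'(\min L_{k})$ and tends to infinity as $k\to\infty$; meanwhile $a_{i}'/a_{d}'\to\infty$ for $i<d$, so the range of $[a_{i}(n)]$ on $L_{k}$ is large enough for the inductive hypothesis to have effect. On each $L_{k}$, the last factor $T_{d}^{[a_{d}(n)]}g_{d}(x_{d})$ equals the constant $T_{d}^{k}g_{d}(x_{d})$, so
\[
\frac{1}{N}\sum_{n=0}^{N-1}\prod_{i=1}^{d}T_{i}^{[a_{i}(n)]}g_{i}(x_{i})=\sum_{k}\frac{|L_{k}|}{N}\,T_{d}^{k}g_{d}(x_{d})\cdot\frac{1}{|L_{k}|}\sum_{n\in L_{k}}\prod_{i=1}^{d-1}T_{i}^{[a_{i}(n)]}g_{i}(x_{i}).
\]
Applying the inductive hypothesis to the $(d-1)$-fold inner average on each $L_{k}$, and then using the elementary identity $\sum_{k}|L_{k}|\,T_{d}^{k}g_{d}(x_{d})=\sum_{n=0}^{N-1}T_{d}^{[a_{d}(n)]}g_{d}(x_{d})$ together with the single-average case for $T_{d}$, the right-hand side tends, modulo a negligible error coming from the initial $k$'s whose total weight $|L_{k}|/N$ vanishes, to $\prod_{i=1}^{d}\mathbb{E}(g_{i}\vert\mathcal{I}(T_{i}))(x_{i})$, which is the claimed limit.

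The main obstacle I anticipate is the \emph{uniform} application of the inductive hypothesis along the moving windows $L_{k}$: the hypothesis, as stated, gives convergence of averages along $[0,M)$ for $M\to\infty$, whereas here we need convergence along $[n_{k},n_{k+1})$ with both $n_{k}$ and the length $n_{k+1}-n_{k}$ tending to infinity. Closing this gap is precisely where the ``invariant property'' plays its role: the $T_{i}$-invariance of the target $\mathbb{E}(g_{i}\vert\mathcal{I}(T_{i}))$ renders the limit unchanged by a starting-point shift, and together with a maximal-type control on sublinear single averages this transfers the convergence to long shifted subintervals. The passage from $\mu_{i}$-a.e.~convergence in each factor to $\nu$-a.e.~convergence on the product, by contrast, is immediate: any $\mu_{i}$-null exceptional set lifts to a $\nu$-null cylindrical product set via the marginal property of the coupling.
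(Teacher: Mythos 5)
There is a genuine gap, and it sits exactly where you flag it: the step ``applying the inductive hypothesis to the $(d-1)$-fold inner average on each $L_{k}$'' is not available. The hypothesis (and the definition of $\mathcal{S}^{\ast}$) only gives a.e.\ convergence of averages over initial segments $[0,N)$, whereas your decomposition requires a.e.\ convergence of averages over moving windows $[n_{k},n_{k+1})$ whose length $|L_{k}|\asymp 1/a_{d}'(n_{k})$ is $o(n_{k})$. This is a moving-averages problem, and pointwise convergence along such short, far-out windows is false in general (already for $d=1$ and linear iterates, moving averages $\frac{1}{\ell_{k}}\sum_{n=n_{k}}^{n_{k}+\ell_{k}-1}T^{n}f$ need not converge a.e.\ without a cone-type condition on $(n_{k},\ell_{k})$). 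The $T_{i}$-invariance of the limit function $\mathbb{E}(g_{i}\vert\mathcal{I}(T_{i}))$ says nothing about the windowed averages themselves, and the ``maximal-type control on sublinear single averages'' you invoke to close the gap is precisely the missing (and unavailable) ingredient; moreover, the recentred sequences $a_{i}(n_{k}+m)$, $0\le m<|L_{k}|$, are not initial-segment averages of a fixed sequence in $\mathcal{S}^{\ast}$, so the induction hypothesis does not even formally apply to them. Note also that your sketch never uses the defining conditions of $\mathcal{S}$ (the $D_{k}$, $M_{k}$ conditions on $a^{-1}$) or the hypothesis $a_{d}'\prec\ldots\prec a_{1}'$ in any quantitative way, which signals that the analytic core of the theorem has not been engaged.

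The paper's route is structurally different and avoids moving windows altogether: it forms the empirical measures $\lambda_{N,\vec{x}}=\frac{1}{N}\sum_{n=0}^{N-1}\bigl(T_{1}^{[a_{1}(n)]}\times\cdots\times T_{d}^{[a_{d}(n)]}\bigr)\delta_{\vec{x}}$ and proves (Lemma~\ref{lem:inv}) that every weak limit is invariant under $T_{1}\times id\times\cdots\times id$, i.e.\ the transformation attached to the \emph{fastest} function $a_{1}$, not the slowest. That invariance is obtained by counting the joint level sets $\mathcal{U}_{b_{1},\vec{b}_{\ast}}$ and estimating the differences $\mathcal{U}_{b_{1},\vec{b}_{\ast}}-\mathcal{U}_{b_{1}-1,\vec{b}_{\ast}}$ via $(a_{1}^{-1})''$ together with the $D_{k}/M_{k}$ hypotheses and $a_{i}'\prec a_{1}'$; then an induction on $d$ using the coordinate projections and the single convergence identifies the weak limit as $\mu_{[T_{1},\ldots,T_{d}],\vec{x}}$ (Theorem~\ref{Thm:LimitMeasure}), and finally a density, telescoping and Markov-inequality argument upgrades from continuous to bounded measurable $f_{i}$. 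If you want to salvage your scheme, you would have to prove a windowed (moving-average) version of the single and multiple convergence for the specific windows $L_{k}$, which is at least as hard as the theorem itself; the invariance-of-weak-limits device is the paper's way of sidestepping exactly that difficulty.
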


\begin{remark*} Let $\mathcal{LE}_\varepsilon$ denote the set of logarithmico-exponential Hardy field functions $a$   satisfying the growth condition $x^\varepsilon\prec a(x)\prec x$ for some $\varepsilon>0.$
 By a variation of the argument in \cite[Proposition~6.4]{F2}, one can obtain a different proof of Theorem \ref{C:main} for the special case where $a_{i}\in\mathcal{LE}_\varepsilon,$ $1\leq i\leq d$.  The idea of \cite[Proposition~6.4]{F2} is to convert the multiple averages for sublinear functions of different growth in $\mathcal{LE}_\varepsilon,$ via a change of variable, to an average of the same form but with linear fastest function. Our method, which is applicable into a larger class of functions, has a different philosophy focusing instead on the invariance property of the averages under the transformations $T_{1}\times id\times\dots\times id,$ $id\times T_{2}\times\dots\times id,$ $\dots,$ $id\times\dots\times id\times T_{d}$, via which we deduce the limit of the expressions of interest. Another advantage of this method is that it can also be used to show that there are certain sublinear functions for which even though the pointwise convergence might in general fail, it holds on all the uniquely ergodic systems (see $\S$~\ref{sec:ue} for details).
\end{remark*}

When $a_{1}$ is linear, i.e., a polynomial of degree 1, we have:

\begin{theorem} \label{Thm:Linear}
	Let $d\in \mathbb{N},$ $(X_i,\mu_i,T_i)$, $1\leq i \leq d$  be measure preserving systems, $a_1$ be a linear function, $a_i\in \mathcal{S^{\ast}},$ $2\leq i\leq d$ with $a_d\prec \ldots\prec a_1$ and $a'_d\prec\ldots\prec a'_1,$ and $\nu$ be any coupling of the spaces $(X_i,\mu_i)$. Then  for all $f_i  \in L^{\infty}(\mu_i),$ $1\leq i\leq d$  
	the averages \[ \frac{1}{N}\sum_{n=0}^{N-1} T_{1}^{[a_1(n)]}f_1(x_1) T_{2}^{[a_2(n)]}f_2(x_2)\cdot\ldots\cdot  T_{d}^{[a_d(n)]}f_d(x_d) \] converge as $N\to\infty$ for $\nu$-a.e. $\vec{x}=(x_1,\ldots,x_d)\in X_1\times\cdots \times X_d.$
	
In particular, if $a_{1}(n)=kn+\ell$, $k=p/q,\; p,\;q\in\mathbb{Z}\setminus\{0\}$, then the limit is equal to
	  \[ \frac{1}{q}\sum_{j=0}^{q-1}\mathbb{E}\Big(T_{1}^{[\frac{pj}{q}+\ell]}f_{1}\vert\mathcal{I}(T^{p}_{1})\Big)(x_1)\mathbb{E}(f_{2}\vert\mathcal{I}(T_{2}))(x_2)\cdot\ldots\cdot\mathbb{E}(f_{d}\vert\mathcal{I}(T_{d}))(x_d),\]
while if $a_{1}(n)=\gamma n+\ell,\gamma\in\mathbb{R}\backslash\mathbb{Q}$, then the limit is equal to
 \[ F(x_1)\mathbb{E}(f_{2}\vert\mathcal{I}(T_{2}))(x_2)\cdot\ldots\cdot\mathbb{E}(f_{d}\vert\mathcal{I}(T_{d}))(x_d),\]
where 
$$F(x)=\sum_{m\in\mathbb{Z}}\exp\Big(2\pi i\frac{m\ell}{\gamma}\Big)\cdot\frac{\exp(-2\pi i\frac{m}{\gamma})-1}{-2\pi i \frac{m}{\gamma}}\mathbb{E}(f_{1}\vert \mathcal{I}_{\gamma,m}(T))(x)$$
and $\mathcal{I}_{\gamma,m}(T)$ is the sub-$\sigma$-algebra generated by the eigenspace of $T$ with eigenvalue $-\frac{m}{\gamma}$.
\end{theorem}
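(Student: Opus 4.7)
The plan is to first reduce the multiple average to the single-transformation average
\[
A_N(x_1):=\frac{1}{N}\sum_{n=0}^{N-1}T_1^{[a_1(n)]}f_1(x_1)
\]
via the invariance technique used to prove Theorem~\ref{C:main}, and then to analyze $A_N$ separately according as the slope of $a_1$ is rational or irrational.

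For the reduction step, I would apply the invariance argument of Theorem~\ref{C:main} coordinate-by-coordinate for $2\leq i\leq d$: since $a_i\in\mathcal{S}^{\ast}$ is strictly sublinear with $a_d\prec\cdots\prec a_2$ (and similarly for derivatives), the averages are asymptotically invariant under $\mathrm{id}\times\cdots\times T_i\times\cdots\times\mathrm{id}$, and this step is unaffected by the linearity of $a_1$ since it acts only on the $i$-th coordinate. We may therefore replace each $f_i$ ($i\geq 2$) by the $T_i$-invariant function $\mathbb{E}(f_i\vert\mathcal{I}(T_i))$ and pull the product $\prod_{i=2}^{d}\mathbb{E}(f_i\vert\mathcal{I}(T_i))(x_i)$ outside the average. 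It remains to establish pointwise convergence of $A_N(x_1)$ and to identify its limit.

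For the rational slope $a_1(n)=(p/q)n+\ell$, I would decompose the summation index $n=qm+j$ with $j\in\{0,\dots,q-1\}$. The identity $[a_1(qm+j)]=pm+[pj/q+\ell]$ gives $T_1^{[a_1(qm+j)]}f_1=(T_1^p)^m\,T_1^{[pj/q+\ell]}f_1$, so each of the $q$ subaverages is a standard Birkhoff average for $T_1^p$ and converges pointwise a.e.\ to $\mathbb{E}(T_1^{[pj/q+\ell]}f_1\,\vert\,\mathcal{I}(T_1^p))(x_1)$; averaging over $j$ produces the stated formula. For the irrational slope $a_1(n)=\gamma n+\ell$, the analysis is spectral. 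Evaluating $A_N$ on an eigenfunction $\phi$ of $T_1$ with $T_1\phi=e^{2\pi i\alpha}\phi$, one has
\[
e^{2\pi i\alpha[\gamma n+\ell]}=e^{2\pi i\alpha(\gamma n+\ell)}\cdot e^{-2\pi i\alpha\{\gamma n+\ell\}},
\]
and expanding the sawtooth $t\mapsto e^{-2\pi i\alpha t}$ in Fourier series on $[0,1)$ together with the Weyl equidistribution of $\{\gamma n+\ell\}\bmod 1$ reduces the time average to a sum over ``resonant'' Fourier modes $k\in\mathbb{Z}$ with $(\alpha+k)\gamma\in\mathbb{Z}$. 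These surviving contributions occur precisely at eigenvalues $\alpha$ lying in the countable set $\{m/\gamma\bmod 1:m\in\mathbb{Z}\}$, and matching the surviving coefficients with those in the statement of $F$ yields the stated formula; the continuous spectral part of $f_1$ contributes nothing, since the resonant set has zero continuous spectral mass.

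The main obstacle lies in the irrational case: the identification above gives $L^2$-convergence to $F$ directly from the spectral theorem, but upgrading to pointwise a.e.\ convergence requires extra care. For this I would truncate the Fourier expansion of the sawtooth at some level $K$, control the tail uniformly via $|\hat\Phi(k)|=O(1/|k|)$, and for the remaining finitely many modes invoke a Wiener--Wintner-type pointwise convergence result for weighted averages of the form $\frac{1}{N}\sum_n e^{2\pi i(\alpha+k)\gamma n}\,T_1^{[\gamma n+\ell]}f_1(x_1)$. The rational case, by comparison, is a direct application of Birkhoff's theorem after the residue decomposition.
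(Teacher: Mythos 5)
Your reduction step contains a genuine gap. The invariance technique behind Theorem~\ref{C:main} (Lemma~\ref{lem:inv}) only yields invariance of the limit measure $\lambda_{\vec{x}}$ under the transformation acting on the coordinate of the \emph{fastest} function $a_1$: the proof compares the counts $\mathcal{U}_{b_1,\vec b_*}$ and $\mathcal{U}_{b_1-1,\vec b_*}$ and crucially uses the second-difference bound on $a_1^{-1}$ and the hypothesis $a_{i}'\prec a_1'$ for $i\geq 2$. The analogous statement for a slower coordinate, which you assert ("the averages are asymptotically invariant under $\mathrm{id}\times\cdots\times T_i\times\cdots\times\mathrm{id}$ for $2\le i\le d$"), is false in the sense in which the lemma proves it: take $d=2$, $a_1(n)=n$, $a_2(n)=\sqrt n$; then for each $b_1$ exactly one $b_2$ has $\mathcal{U}_{b_1,b_2}=1$, so $\frac1N\sum_{b_1,b_2}\vert\mathcal{U}_{b_1,b_2}-\mathcal{U}_{b_1,b_2-1}\vert=2$ does not tend to $0$, i.e.\ shifting the integer level of a slow function by one misaligns entire blocks. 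Consequently you cannot, by this argument, replace each $f_i$ ($i\geq2$) by $\mathbb{E}(f_i\vert\mathcal{I}(T_i))$ and pull the product out of the average; that step is essentially equivalent to the theorem itself. The correct order of operations (the one in the paper) is the reverse of yours: prove invariance of $\lambda_{\vec x}$ under the \emph{first} coordinate (Lemma~\ref{lem:inv1} for integer slope), identify the first marginal by Birkhoff, identify the marginal on coordinates $2,\dots,d$ by Theorem~\ref{Thm:LimitMeasure} applied to the sublinear family $a_2\succ\cdots\succ a_d$, and only then factor the limit; the passage from continuous to $L^\infty$ functions is a separate telescoping/Markov argument as in the proof of Theorem~\ref{C:main}, which your proposal also omits.

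There is a second gap in the irrational case, even for the single average $A_N(x_1)$. Your spectral computation identifies the $L^2$ limit (this is essentially Proposition~\ref{Prop:irequ}), but the pointwise upgrade you propose rests on a ``Wiener--Wintner-type'' theorem for averages $\frac1N\sum_n e^{2\pi i(\alpha+k)\gamma n}\,T_1^{[\gamma n+\ell]}f_1(x_1)$, which still involve the non-integer iterates $[\gamma n+\ell]$; no standard Wiener--Wintner statement covers these, and proving such a result is at least as hard as the convergence you are trying to establish, so as written the pointwise input is missing. Note also that the naive invariance fails here even for $d=1$ (see the remark after Lemma~\ref{lem:inv1}). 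The paper avoids all of this by passing to the suspension $(Y,\nu,S)$, where $T_1^{[\gamma n+\ell]}f_1(x_1)=S^n\tilde f_1(\{\ell\},T_1^{[\ell]}x_1)$, so that pointwise convergence follows from Birkhoff's theorem for $S$ (and case (i) of the multiple statement applied with $S$ in the first coordinate), and the limit $\mathbb{E}(\tilde f_1\vert\mathcal{I}(S))(\{\ell\},T_1^{[\ell]}x_1)$ is then identified with $F(x_1)$ by Corollary~\ref{Cor:irequ}. Your rational-slope computation (residue decomposition $n=qm+j$ and Birkhoff for $T_1^p$) does match the paper's cases (ii)--(iii), but it only becomes available after a correct reduction of the multiple average, which is exactly what is missing.
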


\noindent Via Theorems~\ref{C:main} and ~\ref{Thm:Linear}, we immediately get the following result on sequences of different growth of the form $(n^c)_{n},$ $0<c\leq 1$ (which also follows by methods of Frantzikinakis, using a variation of \cite[Proposition~6.4]{F2}):

\begin{corollary}\label{C:main1}
Let $d\in\mathbb{N}$ and $(X,\mu,T_1,\ldots,T_d)$ be a measure preserving system. For all $0<c_d<\ldots<c_1\leq 1$ and  $f_1,\ldots,f_d \in L^{\infty}(\mu),$ the averages \[ \frac{1}{N}\sum_{n=0}^{N-1} T_{1}^{[n^{c_1}]}f_1(x)\cdot\ldots\cdot T_{d}^{[n^{c_d}]}f_d(x) \] converge as $N\to\infty$ for $\mu$-a.e. $x\in X$ to  \[ \mathbb{E}(f_{1}\vert\mathcal{I}(T_{1}))(x)\cdot\ldots\cdot\mathbb{E}(f_{d}\vert\mathcal{I}(T_{d}))(x).\] 
\end{corollary}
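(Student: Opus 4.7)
The plan is to reduce the corollary to Theorems~\ref{C:main} and~\ref{Thm:Linear} by specializing to the diagonal coupling. I would set $(X_i,\mu_i)=(X,\mu)$ for each $1\le i\le d$ and let $\nu$ be the pushforward of $\mu$ under the diagonal embedding $x\mapsto(x,x,\dots,x)$. This is a probability measure on $X^d$ whose marginals are all equal to $\mu$, and hence a valid coupling in the sense of the theorems; moreover, $\nu$-a.e.\ convergence coincides on the diagonal with the $\mu$-a.e.\ statement of the corollary, so once the limit has been identified for $\nu$, it immediately gives the desired pointwise conclusion.

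Next I would verify the iterate hypotheses for $a_i(x)=x^{c_i}$. For $0<c_i<1$, each $a_i$ is a classical logarithmico-exponential Hardy field function of order $0$, and thus lies in the subclass of Hardy field functions of order $0$ that is properly contained in $\mathcal{S^{\ast}}$. The strict inequalities $c_d<\dots<c_1$ immediately give $a_d\prec\dots\prec a_1$, since $x^{c_{i+1}-c_i}\to 0$ as $x\to\infty$; applying the same computation to $a_i'(x)=c_ix^{c_i-1}$ yields $a_{i+1}'/a_i'=(c_{i+1}/c_i)x^{c_{i+1}-c_i}\to 0$, i.e.\ $a_d'\prec\dots\prec a_1'$.

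With these verifications in hand, if $c_1<1$ I would apply Theorem~\ref{C:main} directly to conclude that the averages converge $\mu$-a.e.\ (on the diagonal) to the product $\mathbb{E}(f_1\vert\mathcal{I}(T_1))(x)\cdots\mathbb{E}(f_d\vert\mathcal{I}(T_d))(x)$. If instead $c_1=1$, then $a_1(n)=n$ is linear with $k=1=1/1$ and $\ell=0$, while the remaining $a_i$ for $2\le i\le d$ still satisfy the sublinear hypotheses of the previous paragraph. Invoking Theorem~\ref{Thm:Linear} with $p=q=1$, the rational prefactor collapses to $\frac{1}{1}\sum_{j=0}^{0}\mathbb{E}(T_1^{[0]}f_1\vert\mathcal{I}(T_1^1))(x)=\mathbb{E}(f_1\vert\mathcal{I}(T_1))(x)$, producing the same product of conditional expectations. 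The only point that is not entirely automatic is the membership of the power functions $x^c$ in $\mathcal{S^{\ast}}$ for $0<c<1$, which I expect to follow at once from the definitions in Section~2, since $\mathcal{S^{\ast}}$ is designed precisely to encompass Hardy field functions of order $0$.
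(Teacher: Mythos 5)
Your proposal is correct and takes essentially the same route as the paper, which deduces the corollary directly from Theorems~\ref{C:main} and~\ref{Thm:Linear}: the diagonal coupling, the verification that $a_i(x)=x^{c_i}$ satisfies $a_d\prec\ldots\prec a_1$ and $a_d'\prec\ldots\prec a_1'$, and the reduction of the case $c_1=1$ to the linear theorem with $p=q=1$, $\ell=0$, so that the rational formula collapses to $\mathbb{E}(f_1\vert\mathcal{I}(T_1))$. The one point worth stating more carefully is that $x^{c}\in\mathcal{S}^{\ast}$ for $0<c<1$ is not purely definitional: it follows from the chain $\mathcal{H}^s_\varepsilon\subseteq\mathcal{T}\subseteq\mathcal{S}^{\ast}$ established in \S~\ref{se:2} and \S~\ref{se:4} (the latter inclusion relying on the single-average pointwise convergence from \cite{BKQW}), which is exactly the ingredient your last sentence anticipates.
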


\subsection{Single convergence implies multiple convergence}

The philosophy of this article is that for a specific nice and wide class of sublinear functions, we have that ``single convergence implies the multiple one''. More specifically, assuming no commutativity on the transformations $T_i$,  we show in Theorem~\ref{C:main}, that averages as in \eqref{E:central} with integer parts of functions of different growth rate from the aforementioned class, converge pointwisely and the limit is the expected one, i.e., the product of conditional expectations, using the fact that the single average converges. 
The same method also extends to the case when the fastest growing function is linear and we get Theorem~\ref{Thm:Linear}. Our arguments throughout the article are elementary and have a soft touch of ergodic theory.

\medskip

At this point we introduce some notation.  
For $1\leq i\leq d,$ let $(X_i,\mu_i,T_i)$ be measure preserving (m.p.) systems (we also assume that each $(X_i,T_i)$ is a topological dynamical system) and $\mu_i=\int \mu_{[T_i],x}d\mu_i(x)$ be the disintegration of $\mu_i$ over its factor $\mathcal{I}(T_i)$ (i.e., the {\em ergodic decomposition}).  For $\vec{x}=(x_1,\ldots,x_d)$, let $\mu_{[T_{1},\dots,T_{d}], \vec{x}}$ be the measure on $X_1\times\cdots\times X_d$ defined by
$$\int\limits_{X_1\times \cdots \times X_d} f_{1}\otimes\dots\otimes f_{d}\; d\mu_{[T_{1},\dots,T_{d}],\vec{x}}=\mathbb{E}(f_{1}\vert\mathcal{I}(T_{1}))(x_1)\cdot\ldots\cdot\mathbb{E}(f_{d}\vert\mathcal{I}(T_{d}))(x_d)$$ 
for all $f_{1},\dots,f_{d}\in L^{\infty}(\mu).$
It is easy to see that $\mu_{[T_{1},\dots,T_{d}],\vec{x}}=\bigotimes_{i=1}^d \mu_{[T_i],x_i}.$ 
 Let also
\begin{equation}\label{E:(1)}
\lambda_{N,\vec{x}}:=\frac{1}{N}\sum_{n=0}^{N-1}\Big(T_{1}^{[a_1(n)]}\times\dots\times T_{d}^{[a_d(n)]}\Big)\delta_{\vec{x}},
\end{equation}
where  $\vec{x} \in X_1\times\cdots\times X_d$ and $\delta_{\vec{x}}$ denotes the Dirac measure at $\vec{x}$. 

Denoting with $\mathbb{R}^+$ a set of the form $(c,+\infty)$ for some $c\geq 0,$ we define the class
\begin{eqnarray*}
S:= \left\{a \in \mathcal{C}^{3}(\mathbb{R}^+)\;\Big\vert\; a,\; \frac{1}{a'}\in \mathcal{SL} \;\text{and}\;a^{-1}\in M_1\cap D_0\cap D_1\cap (D_2\cup M_2) 
  \right\},
\end{eqnarray*}
where  for $k\in\mathbb{N}\cup\{0\}$ $$D_k:=\left\{a:\;
\limsup_{x\to \text{sgn}(a^{-1})\cdot\infty}\sup\limits_{h\in [-1,1] }\left|\frac{a^{(k+1)}(x+h)}{a^{(k)}(x)}\right|<\infty\right\},$$ $$M_k:=\left\{a:\;a^{(k)}\;\text{is eventually monotone}\right\},$$  sgn is the \emph{sign} function and $$\mathcal{SL}=\{a:\;a(x)\prec x\}$$ is the set of \emph{sublinear functions} (recall that $a\prec b$ means $a(x)/b(x)\to 0$ as $x\to\infty$).

\medskip

Note that the $\limsup$ that appears in the definition of $D_k$ can in general be any $\alpha\in [0,\infty]. $
Indeed, for $k=0,$ the function $a(x)=\log x$ gives $\alpha=0$; to get a specific $\alpha>0,$ pick $\beta$ with $\beta\exp(\beta)=\alpha$ and let $a(x)=\exp(\beta x),$  while to get $\alpha=\infty$ pick $a(x)=\exp(x^2)$. 

\medskip

Note also that every function $a\in \mathcal{S}$ satisfies $\log x\prec a(x).$ Indeed, since $a^{-1}\in M_1,$ we have that $a'$ has eventually constant sign, hence integrating the relation $x|a'(x)|\geq M$ (that holds eventually for $M>0$ since $1/a'\in \mathcal{SL}$) we get $\log x\ll |a(x)|.$ Using again that $1/a'\in\mathcal{SL}$ and since $\log x$ isn't fast enough to have this property, we get $\log x\prec a(x).$

Let $\mathcal{S}^{\ast}\subseteq \mathcal{S}$ denote the subclass of functions where  $\lim_{N\to\infty}\frac{1}{N}\sum_{n=0}^{N-1} f(T^{[a(n)]}x)$ exists pointwisely (a.e.) for every measure preserving system $(X,\mu,T)$ and every bounded measurable function $f$.  We stress out the fact that $\mathcal{S}^\ast$ is a strict subset of $\mathcal{S}$ (see $\S$~\ref{sec:ue}). 








\medskip

The following result via a density argument will lead us to the proof of Theorem~\ref{C:main}.

\begin{theorem} \label{Thm:LimitMeasure}
	Let $d\in \mathbb{N}$ and $(X_i,\mu_i,T_i),$ $1\leq i\leq d$ be m.p. systems, $a_i\in \mathcal{S^{\ast}},$ $1\leq i\leq d$ with $a_d\prec \ldots\prec a_1$ and $a'_d\prec\ldots\prec a'_1,$ and $\nu$ be any coupling of the spaces $(X_i,\mu_i)$. Then, for $\nu$-a.e. $\vec{x} \in X_1\times \cdots \times X_d$,   we have that   $\lambda_{N,\vec{x}}$ converges to $\mu_{[T_1,\ldots,T_d],\vec{x}}$ as $N\to\infty.$ 
\end{theorem}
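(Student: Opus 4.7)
The strategy is to show that every weak-$*$ limit point of $(\lambda_{N,\vec{x}})_N$ coincides with $\mu_{[T_1,\ldots,T_d],\vec{x}}=\bigotimes_{i=1}^d\mu_{[T_i],x_i}$; since the $X_i$ are compact metric, uniqueness of the limit forces convergence of the full sequence. Fixing such a weak-$*$ limit $\lambda$ along some $N_k\to\infty$, I would prove, for $\nu$-a.e.\ $\vec{x}$, two properties: (i) $\lambda$ is invariant under each $S_j:=\mathrm{id}\times\cdots\times T_j\times\cdots\times\mathrm{id}$; and (ii) the $j$-th marginal of $\lambda$ is $\mu_{[T_j],x_j}$. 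A disintegration argument together with the ergodicity of the $\mu_{[T_j],x_j}$'s then identifies $\lambda$.

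\textit{Invariance.} Testing against a product $f_1\otimes\cdots\otimes f_d$, $S_j$-invariance reduces to
\[
D_{N,j}:=\frac{1}{N}\sum_{n=0}^{N-1}\Bigl(\prod_{i\neq j}f_i(T_i^{[a_i(n)]}x_i)\Bigr)\bigl(f_j(T_j^{[a_j(n)]+1}x_j)-f_j(T_j^{[a_j(n)]}x_j)\bigr)\to 0.
\]
Partition $\{0,\ldots,N-1\}$ into maximal runs $B_k=\{n:[a_j(n)]=k\}$; the hypotheses $1/a_j'\in\mathcal{SL}$ and $a_j^{-1}\in M_1\cap D_0\cap D_1$ force $|B_k|\approx (a_j^{-1})'(k)$, $\max_k|B_k|=o(N)$ and $|B_{k-1}|/|B_k|\to 1$. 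Summation by parts converts $D_{N,j}$ into a boundary term of size $O(\max_k|B_k|/N)=o(1)$ plus a main term $\frac{1}{N}\sum_k\bigl(|B_{k-1}|A_{k-1}-|B_k|A_k\bigr)f_j(T_j^kx_j)$, where $A_k:=|B_k|^{-1}\sum_{n\in B_k}\prod_{i\neq j}f_i(T_i^{[a_i(n)]}x_i)$. The growth separation $a_d'\prec\cdots\prec a_1'$ then decouples the $A_k$'s: for $i<j$ (faster), $[a_i(n)]$ sweeps many integer values within each block $B_k$, so $A_k$ stabilises at a common Birkhoff-type limit; for $i>j$ (slower), $[a_i(n)]$ is essentially constant on adjacent pairs $B_{k-1}\cup B_k$. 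Combined with the block-length regularity, this forces $D_{N,j}\to 0$.

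\textit{Marginals.} The $j$-th projection $\pi_j(\lambda_{N,\vec{x}})=\frac{1}{N}\sum_n T_j^{[a_j(n)]}\delta_{x_j}$ converges $\mu_j$-a.e.\ to some probability measure $\Lambda_{x_j}$ by $a_j\in\mathcal{S}^*$. The $d=1$ case of the invariance step gives $T_j$-invariance of $\Lambda_{x_j}$, so $L_f(x_j):=\int f\,d\Lambda_{x_j}$ is $\mathcal{I}(T_j)$-measurable. Integrating $L_f$ against $\mathbf{1}_A$ for any $T_j$-invariant $A$ and interchanging limit with integration (bounded convergence) yields $\int_A L_f\,d\mu_j=\int_A f\,d\mu_j=\int_A\mathbb{E}(f|\mathcal{I}(T_j))\,d\mu_j$; hence $L_f=\mathbb{E}(f|\mathcal{I}(T_j))$ $\mu_j$-a.e., and a countable density argument in $L^\infty(\mu_j)$ promotes this to $\Lambda_{x_j}=\mu_{[T_j],x_j}$.

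\textit{Assembly and main obstacle.} Disintegrate $\lambda$ over its last $d-1$ coordinates as $\lambda=\int\lambda_{\vec{y}}\,d\lambda^{(2,\ldots,d)}(\vec{y})$. Then $S_1$-invariance gives $T_1$-invariance of $\lambda_{\vec{y}}$ for a.e.\ $\vec{y}$, while the first marginal equals the $T_1$-ergodic measure $\mu_{[T_1],x_1}$; uniqueness of the ergodic decomposition forces $\lambda_{\vec{y}}=\mu_{[T_1],x_1}$ a.e., so $\lambda=\mu_{[T_1],x_1}\otimes\lambda^{(2,\ldots,d)}$. Iterating on the remaining coordinates yields $\lambda=\bigotimes_j\mu_{[T_j],x_j}$. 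The principal technical difficulty is the invariance step: the smoothness conditions in $\mathcal{S}$ are tuned precisely to give $|B_{k-1}|/|B_k|\to 1$ and a negligible boundary, while the growth hierarchy provides the cross-coordinate decoupling. Both ingredients are essential, and their combination is what makes the pointwise (rather than $L^2$) statement accessible.
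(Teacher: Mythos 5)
Your overall architecture (weak$^\ast$ limit points, an invariance property, identification of the marginals, then a disintegration/extreme-point assembly) is close in spirit to the paper's, and your ``Marginals'' and ``Assembly'' steps are sound; the extreme-point argument is a legitimate substitute for the paper's explicit computation of $\int f_1\otimes\cdots\otimes f_d\,d\lambda_{\vec x}$. The genuine gap is in the invariance step, precisely where the paper is careful: the paper (Lemma~\ref{lem:inv}) proves invariance of $\lambda_{\vec x}$ only under $T_1\times id\times\cdots\times id$, i.e.\ under the coordinate carrying the \emph{fastest} function, and then runs an induction on $d$, using that the projection onto the last $d-1$ coordinates is again an average of the same type whose fastest function is $a_2$. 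You instead assert invariance under every $S_j$, and your justification for $j\geq 2$ does not go through: when faster coordinates $i<j$ are present, your main term requires $A_{k-1}\approx A_k$ for most $k$, i.e.\ that the block averages $A_k=|B_k|^{-1}\sum_{n\in B_k}\prod_{i\neq j}f_i(T_i^{[a_i(n)]}x_i)$ over consecutive level sets of $[a_j]$ ``stabilise at a common Birkhoff-type limit''. That is a pointwise convergence statement along moving blocks whose lengths grow with $k$; it follows neither from $a_i\in\mathcal{S}^\ast$ (which only gives Ces\`aro convergence of the full averages and says nothing about moving windows) nor from the smoothness conditions defining $\mathcal{S}$, and it is essentially the equidistribution statement you are trying to prove, so the argument is circular at this point. (For $j=1$ the problematic case is vacuous and your sketch is in essence the paper's Lemma~\ref{lem:inv}, though the exceptional blocks where some slower $[a_i(n)]$ jumps inside or between adjacent level sets of $[a_1]$ still need the quantitative treatment the paper gives for Terms~\eqref{E:5'} and~\eqref{E:6'}, using $a_i'\prec a_1'$ and the $D_0,D_1$ conditions; also what is actually used there is the summed bound via $(a_1^{-1})''$, culminating in $|(a_1^{-1})'(a_1(N))|/N\ll (Na_1'(N))^{-1}\to 0$, rather than a pointwise ratio $|B_{k-1}|/|B_k|\to 1$.)

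The gap is repairable without new ideas, because your assembly never truly needs $S_j$-invariance of the full $d$-fold limit for $j\geq 2$: after splitting off $\mu_{[T_1],x_1}$ using $S_1$-invariance and ergodicity, the remaining factor $\lambda^{(2,\ldots,d)}$ is a weak limit of the $(d-1)$-fold averages whose fastest function is $a_2$, so you can apply the fastest-coordinate invariance lemma to it, identify its first marginal by the $d=1$ case, and iterate. Doing so turns your proposal into the paper's induction on $d$; as written, however, the claimed invariance under the slow coordinates (and the block ``decoupling'' offered to support it) is unproved, even though it is true a posteriori because the limit measure is a product.
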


\begin{remark*}  In $\S$~\ref{se:2}, we show that $\mathcal{S}$ contains functions  $a$ which belong to some Hardy field and satisfy  $x^\varepsilon\prec a(x)\prec x$ for some $\varepsilon>0.$ So, by  \cite[Theorem~3.4]{BKQW} we actually have that each such function $a$ is in $\mathcal{S}^{\ast}$ (with convergence to the expected limit, i.e., the conditional expectation $\mathbb{E}(f|\mathcal{I}(T))$) while slow Hardy field functions (i.e., $1\prec a(x)\prec \log x \exp((\log(\log x))^m)$ for some $0\leq m< 1$) don't belong to $\mathcal{S^{\ast}}$ (see \cite[Theorem~3.6]{BKQW}). However, even though Theorem~\ref{Thm:LimitMeasure} in general might fail (take for example $a_i\in \mathcal{S}\backslash\mathcal{S^{\ast}}$ for some $1\leq i\leq d$), we have its validity for uniquely ergodic systems 
 and continuous functions on them, since then  {single convergence holds not only for functions in $\mathcal{S}^{\ast}$ but for all functions in $\mathcal{S}$} 
 (see Theorem~\ref{D:sUniqueErgodic}).
\end{remark*}

\subsection{Pointwise averages on uniquely ergodic systems}\label{sec:ue}

A topological system  $(X,T)$ is  \emph{uniquely ergodic} if there is a unique Borel probability measure which is $T$-invariant. Via the following result for single convergence, which we show in $\S$~\ref{Sec:MainResults}, under the unique ergodicity assumption of the system, we extend Theorem~\ref{Thm:LimitMeasure} (to Theorem~\ref{D:mainUniqueErgodic} below).



\begin{theorem}\label{D:sUniqueErgodic} Let $(X,T)$ be a uniquely ergodic system with unique $T$-invariant measure $\mu$ and $a\in \mathcal{S}.$ Then for any continuous function $f$ on $X,$ for every $x\in X$ 
	we have that \[ \lim_{N\to\infty}\frac{1}{N}\sum_{n=0}^{N-1} T^{[a(n)]}f(x)=\int_{X}f\,d\mu.\]
\end{theorem}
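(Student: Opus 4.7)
The plan is to exploit unique ergodicity through a weak-$*$ invariance argument. Fix $x\in X$ and set $\nu^x_N:=\frac{1}{N}\sum_{n=0}^{N-1}\delta_{T^{[a(n)]}x}$; the desired conclusion amounts to $\nu^x_N\to\mu$ in the weak-$*$ topology. Since $X$ is compact, by passing to subsequences it suffices to show that every weak-$*$ limit point of $(\nu^x_N)_N$ is $T$-invariant, because unique ergodicity then forces it to be $\mu$. Concretely, I would prove that for every continuous $f\colon X\to\mathbb{C}$,
\[
A_N(x)\;:=\;\sum_{n=0}^{N-1}\bigl(f(T^{[a(n)]+1}x)-f(T^{[a(n)]}x)\bigr)\;=\;o(N).
\]

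The key computation is to regroup $A_N$ by the value of $[a(n)]$. Since $a\in\mathcal{S}$ is eventually monotone with $a'(x)\to 0$ (using $1/a'\in\mathcal{SL}$ together with $a^{-1}\in M_1$), for large $n$ one has $[a(n+1)]-[a(n)]\in\{0,1\}$ and every integer $m$ in the relevant range is attained exactly $w_m:=\lfloor a^{-1}(m+1)\rfloor-\lfloor a^{-1}(m)\rfloor=(a^{-1})'(\xi_m)+O(1)$ times. Thus, writing $M_N:=[a(N-1)]$ and $v_m:=f(T^m x)$,
\[
A_N(x)\;=\;\sum_{m=m_0}^{M_N} w_m(v_{m+1}-v_m)+O(1),
\]
and a single Abel summation bounds $|A_N(x)|$ by $2\|f\|_\infty\,w_{M_N}+\|f\|_\infty\sum_{m=m_0}^{M_N-1}|w_{m+1}-w_m|$.

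The hard part will be controlling $\sum|w_{m+1}-w_m|$. Splitting $w_m$ into its smooth trend $(a^{-1})'(m)$ and an $O(1)$ rounding error, the smooth part sums to at most the total variation of $(a^{-1})'$ on $[m_0,M_N]$, which is $O(w_{M_N})$ once one knows $(a^{-1})'$ is eventually monotone (from $a^{-1}\in M_1$) and $(a^{-1})''$ is eventually of one sign or integrable, the latter supplied by $a^{-1}\in M_2\cup D_2$ together with the regularity provided by $D_1$; the rounding part contributes at most $O(M_N)$. Putting these together, $|A_N(x)|\ll\|f\|_\infty(w_{M_N}+M_N)$. Since $1/a'\in\mathcal{SL}$ yields $w_{M_N}\asymp 1/a'(N)=o(N)$ and $a\in\mathcal{SL}$ yields $M_N\asymp a(N)=o(N)$, we conclude $A_N(x)=o(N)$, uniformly in $x$. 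Every weak-$*$ limit point of $\nu^x_N$ is therefore $T$-invariant, unique ergodicity forces it to be $\mu$, and the stated convergence $\frac{1}{N}\sum_{n=0}^{N-1}f(T^{[a(n)]}x)\to\int f\,d\mu$ follows by testing the weak-$*$ convergence $\nu^x_N\to\mu$ against the continuous function $f$.
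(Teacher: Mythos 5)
Your proof is correct and follows the same strategy as the paper: fix $x$, form the empirical measures $\nu^x_N$, show that every weak-$*$ limit point is $T$-invariant, and invoke unique ergodicity --- this is exactly how the paper argues, via Lemma~\ref{lem:inv} specialized to $d=1$ together with the short proof of Theorem~\ref{D:sUniqueErgodic} in Section~\ref{Sec:MainResults}. The only genuine divergence is in how the invariance estimate $\frac{1}{N}\sum_m|w_{m+1}-w_m|\to 0$ is obtained: you telescope using the eventual monotonicity of $(a^{-1})'$ (the $M_1$ hypothesis), bounding the smooth part by the total variation of $(a^{-1})'$ and hence by its endpoint value of order $1/a'(N)$, whereas the paper's Lemma~\ref{lem:inv} passes through the second difference, bounding $|\mathcal{U}_{b_1}-\mathcal{U}_{b_1-1}|$ by $|(a^{-1})''(\xi)|+2$ and using $a^{-1}\in (D_2\cup M_2)\cap D_1$ with an integral comparison; both routes reduce the average to $\frac{1}{N}\bigl(1/a'(N)+O(a(N))\bigr)\to 0$, by $1/a'\in\mathcal{SL}$ and $a\in\mathcal{SL}$. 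For $d=1$ your variant is slightly leaner (it avoids $D_2\cup M_2$), but note that you still need the $D_1$-type comparison (in the spirit of Lemma~\ref{L:bound}) to replace $(a^{-1})'$ evaluated near the integer $M_N=[a(N-1)]$ by $1/a'(N)$, which is implicit in your claim $w_{M_N}\asymp 1/a'(N)$. Two harmless slips worth fixing: the regrouping error is $O(w_{M_N})=o(N)$ rather than $O(1)$, since the last level set may be only partially filled; and $a'\to 0$ follows from $a\in\mathcal{SL}$ together with $M_1$, not from $1/a'\in\mathcal{SL}$ (which only gives $x|a'(x)|\to\infty$). Finally, as in the paper, the eventually negative (decreasing) case of $a$ should be noted as symmetric.
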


As we mentioned before, this result has been studied for general systems in \cite{BKQW,Bo2} along functions which belong to a smaller class of functions than $\mathcal{S}$ (see Theorem \ref{thm:BKQW2}), and in general it might fail for $a\in \mathcal{S}.$ One can show (after some elementary calculations) that $$a(x)=\log x \log(\log x)\in S\setminus S^\ast$$ ($a\notin S^\ast$ by \cite[Theorem~3.6]{BKQW}, since $a$ satisfies the hypothesis of the slow growth rate).


\noindent A similar argument as in Theorem~\ref{C:main} extends Theorem~\ref{D:sUniqueErgodic} to multiple averages:

\begin{theorem}\label{D:mainUniqueErgodic} Let $d\in\mathbb{N}$, $(X_i,T_i)$ be uniquely ergodic systems with unique $T_{i}$-invariant measures $\mu_i,$ $a_i\in \mathcal{S},$ $1\leq i\leq d$ with $a_d\prec \ldots\prec a_1$ and $a'_d\prec\ldots\prec a'_1,$ and $f_i$ be continuous functions on $X_i$, $1\leq i\leq d$.  Then for any coupling $\nu$ of these systems, 
	the averages \[ \frac{1}{N}\sum_{n=0}^{N-1} T_{1}^{[a_1(n)]}f_1(x_1) \cdot\ldots \cdot  T_{d}^{[a_d(n)]}f_d(x_d) \] converge as $N\to\infty$ for $\nu$-a.e. $(x_1,\ldots,x_d)\in X_1\times\cdots\times X_d$ to  \[ \int f_{1}\;d\mu_1 \cdot\ldots\cdot\int f_{d}\;d\mu_d.\] 
\end{theorem}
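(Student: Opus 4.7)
The plan is to mirror the proof of Theorem~\ref{Thm:LimitMeasure}, substituting Theorem~\ref{D:sUniqueErgodic} for the $\mathcal{S}^\ast$-hypothesis as the source of single pointwise convergence. The crucial simplification afforded by unique ergodicity is that each $\mathcal{I}(T_i)$ is $\mu_i$-trivial, so $\mathbb{E}(f_i\vert\mathcal{I}(T_i))=\int f_i\,d\mu_i$ and the ergodic decomposition degenerates to $\mu_{[T_i],x_i}=\mu_i$ for $\mu_i$-a.e.\ $x_i$. Since any coupling $\nu$ has $\mu_i$ as its $i$-th marginal, this forces $\mu_{[T_1,\ldots,T_d],\vec{x}}=\mu_1\otimes\cdots\otimes\mu_d$ for $\nu$-a.e.\ $\vec{x}$. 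By Stone--Weierstrass, products $f_1\otimes\cdots\otimes f_d$ with $f_i\in C(X_i)$ are linearly dense in $C(X_1\times\cdots\times X_d)$, so proving Theorem~\ref{D:mainUniqueErgodic} reduces to showing the weak-$\ast$ convergence $\lambda_{N,\vec{x}}\to\mu_1\otimes\cdots\otimes\mu_d$ for $\nu$-a.e.\ $\vec{x}$, with $\lambda_{N,\vec{x}}$ as in \eqref{E:(1)}.

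Fix such $\vec{x}$ and let $\lambda_{\vec{x}}$ be any weak-$\ast$ subsequential limit of $\{\lambda_{N,\vec{x}}\}$, which exists by compactness. Following the steps of Theorem~\ref{Thm:LimitMeasure}, the growth hierarchy $a_d\prec\cdots\prec a_1$ and $a_d'\prec\cdots\prec a_1'$, together with the sublinearity encoded in $\mathcal{S}$, yield that $\lambda_{\vec{x}}$ is invariant under each partial action $\mathrm{id}\times\cdots\times T_i\times\cdots\times\mathrm{id}$, $1\le i\le d$: the sparseness of the jump set of $[a_i(n)]$ ensures that replacing the exponent of $T_i$ by one more perturbs only a zero-density proportion of terms in the Cesàro average. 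A standard disintegration argument then shows that a probability measure on $X_1\times\cdots\times X_d$ invariant under $T_i$ in the $i$-th coordinate for every $i$ must equal $\mu_1\otimes\cdots\otimes\mu_d$: indeed, unique ergodicity of $T_i$ forces the $T_i$-invariant conditional measure of $\lambda_{\vec{x}}$ on the $i$-th fiber to be $\mu_i$, and iterating in $i$ yields the product. Since every subsequential weak-$\ast$ limit of $\{\lambda_{N,\vec{x}}\}$ coincides with $\mu_1\otimes\cdots\otimes\mu_d$, the full sequence converges to it, which is the desired conclusion.

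The principal obstacle is justifying the coordinate-wise invariance of $\lambda_{\vec{x}}$ from the weaker membership $a_i\in\mathcal{S}$ rather than $a_i\in\mathcal{S}^\ast$. In the proof of Theorem~\ref{Thm:LimitMeasure}, the analogous invariance step uses pointwise single convergence along $[a_i(n)]$ as a black box. Here that input must be supplied by Theorem~\ref{D:sUniqueErgodic}, which delivers single convergence only for continuous integrands. One therefore needs to trace the invariance step carefully and check that it can be run entirely with continuous product test functions $f_1\otimes\cdots\otimes f_d$, never invoking $L^\infty$ single averages, since $\mathcal{S}\setminus\mathcal{S}^\ast$ contains functions such as $a(x)=\log x\log\log x$ for which the latter can fail.
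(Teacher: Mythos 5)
Your overall frame (triviality of each $\mathcal{I}(T_i)$, reduction by Stone--Weierstrass to weak$^{\ast}$ convergence $\lambda_{N,\vec{x}}\to\mu_1\otimes\cdots\otimes\mu_d$, identification of subsequential limits) matches the paper's intent, but the step you lean on is not available as you state it. You claim that every subsequential limit $\lambda_{\vec{x}}$ is invariant under \emph{each} partial action $id\times\cdots\times T_i\times\cdots\times id$, justified by ``the sparseness of the jump set of $[a_i(n)]$''. Lemma~\ref{lem:inv} gives invariance only under $T_{1}\times id\times\cdots\times id$, i.e.\ in the coordinate of the \emph{fastest} function, and its proof is not a sparsity remark: it is a counting argument comparing the multiplicities $\mathcal{U}_{b_1,\vec b_\ast}$ and $\mathcal{U}_{b_1-1,\vec b_\ast}$ via second-derivative estimates on $a_1^{-1}$ (the hypotheses $a_1^{-1}\in D_1\cap(D_2\cup M_2)$, $a_d'\prec\cdots\prec a_1'$), and it works precisely because shifting $b_1$ by one leaves the slower coordinates' values $b_i=[a_i\circ a_1^{-1}(b_1)]$ essentially unchanged. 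For a slower coordinate $i\geq 2$ the analogous direct argument fails: to advance $[a_i(n)]$ by one you must move $n$ by roughly $1/a_i'(n)\to\infty$, over which the faster exponents $[a_j(n)]$, $j<i$, change completely; equivalently, at finite $N$ the measure \eqref{E:(1)} concentrates near the set $\{b_j\approx a_j\circ a_1^{-1}(b_1)\}$, and its image under $id\times\cdots\times T_i\times\cdots\times id$ concentrates near a disjoint translate of that set, so the discrepancy is not a zero-density set of terms. Coordinate-wise invariance in the slow coordinates is true only a posteriori (the limit is the product measure), and proving it directly amounts to the equidistribution of the fast coordinates along long blocks — essentially the statement you are trying to prove. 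So as written there is a gap at the central step.

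The paper's proof avoids this entirely: it repeats the induction on $d$ from Theorem~\ref{Thm:LimitMeasure}, where only invariance in the first coordinate (Lemma~\ref{lem:inv}, which needs just $a_i\in\mathcal{S}$) is used, the projection onto coordinates $2,\dots,d$ is identified by the induction hypothesis (it is exactly the $(d-1)$-dimensional average), and the single-average input is supplied by Theorem~\ref{D:sUniqueErgodic}; unique ergodicity then pins the fiber measures over the first coordinate to $\mu_1$. Note also that your ``principal obstacle'' in the last paragraph is misplaced: $\mathcal{S}^{\ast}$ never enters Lemma~\ref{lem:inv}; it is only used for single convergence, and since the induction in Theorem~\ref{Thm:LimitMeasure} tests $\lambda_{\vec x}$ exclusively against the countable dense families $\mathcal{C}_i\subseteq C(X_i)$, Theorem~\ref{D:sUniqueErgodic} (continuous functions, every point) is exactly the needed substitute. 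To repair your argument, either follow that induction, or, if you insist on all-coordinate invariance, prove it for $i\geq2$ using the \emph{uniform} convergence of Birkhoff averages for the uniquely ergodic $T_j$, $j<i$, along the long constancy blocks of $[a_i(n)]$ — an argument you would have to supply, not a consequence of sparseness of jumps.
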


\begin{remark*} For pointwise averages with $a_{1}$ being a linear function, a similar statement to Theorem~\ref{Thm:Linear} can be derived assuming unique ergodicity of $(X_i,\mu_i,T_i)$ for $i\geq 2$ taking averages over functions in $\mathcal{S}$ on continuous functions (the unique ergodicity of $(X_1,\mu_1,T_1)$ is not necessary since for the linear $a_{1}$ we always have single convergence).
\end{remark*}

 In $\S$~\ref{se:2}, we introduce a specific large family $\mathcal{T}$ of sublinear functions which contains properly the Hardy field functions $a$ with $x^\varepsilon\prec a(x)\prec x$ for some $\varepsilon>0$ and is contained properly in the class of Fej{\'e}r functions (see $\S$~\ref{se:2} for definitions of these two important classes of functions).
	We also show (in $\S$~\ref{se:2} and $\S$~\ref{se:4}) that $\mathcal{T}$ is properly contained in $\mathcal{S}^{\ast}$ and that for functions of different growth rate in $\mathcal{T}$ we get different growth rate for their derivatives; as a result, our results hold for $a_{i}\in\mathcal{T}$ of different growth (see Corollary \ref{cor:1} for details).

\medskip

\subsection*{Definitions  and notations.}
A {\em measure preserving system} $(X,\mathcal{X},\mu, T_1,\ldots,T_d)$ is a probability space $(X,\mathcal{X},\mu)$ endowed with measure preserving transformations $T_i\colon X\to X$, meaning that $\mu(T_i^{-1}A)=\mu(A)$ for all $A\in \mathcal{X}$ and $1\leq i\leq d.$ We omit writing the $\sigma$-algebra $\mathcal{X}$ when this causes no confusion.  Throughout the paper we always assume that $X$ is a compact metric space, $\mathcal{X}$ is its Borel $\sigma$-algebra and $\mu$ is a Borel measure. We let $M(X)$ denote the convex set of probability measures on $X$ which is compact for the weak$^{\ast}$-topology.  
A {\em coupling} $\lambda$ of two probability spaces $(X_1,\mu_1)$ and $(X_2,\mu_2)$ is a measure in $M(X\times Y)$, whose marginals are equal to $\mu_1$ and $\mu_2$ respectively.
A {\em joining} of two measure preserving systems $(X_1,\mu_1,T_1,\ldots,T_d)$ and $(X_2,\mu_2,S_1,\ldots,S_d)$ is a coupling of $(X_1,\mu_1)$ and $(X_2,\mu_2)$ that is invariant under the diagonal transformations $T_1\times S_1,\ldots, T_d \times S_d$ (these definitions extend naturally to $k$ systems, $k\geq 2$). 
A {\em factor map} between two measure preserving systems $(X,\mu,T_1,\ldots,T_d)$ and $(Y,\nu,S_1,\ldots,S_d)$ is a measurable function $\pi\colon X\to Y$ such that the push-forward measure $\pi_{\ast}\mu$ is equal to  $\nu$ and  $\pi\circ T_i=S_i\circ \pi$, $1\leq i\leq d$. 
 Finally, for two quantities $a$ and $b$, we write $a\ll b$ if there exists $c>0$ such that $|a|\leq c\cdot |b|,$  and $a\ll_{\delta_1,\ldots,\delta_r} b$ if there exists $c\equiv c(\delta_1,\ldots,\delta_r)>0$ with $ |a|\leq c\cdot |b|.$

\subsection*{ Acknowledgements.} We thank Nikos Frantzikinakis for bringing us to the alternative proof of Theorem~\ref{C:main} for functions of different growth from the class $\mathcal{LE}_{\varepsilon}$.

\section{A nice class of sublinear functions}\label{se:2}

 In this section we define a nice class of sublinear functions, $\mathcal{T},$ first appeared in \cite{BK},  which we'll show that is a subclass of $\mathcal{S}^{\ast}$ defined in the previous section. More specifically, we show in this section that $\mathcal{T}$ is a proper subset of $\mathcal{S}$ and then, in $\S$~\ref{se:4}, we prove that $\mathcal{T}$ is a proper subset of $\mathcal{S}^{\ast}.$

Let $$\mathcal{R}:=\Big\{a\in \mathcal{C}^3(\mathbb{R}^+):\;\text{the limits}\;\lim_{x\to\infty}\frac{xa'(x)}{a(x)},\;\lim_{x\to\infty}\frac{xa''(x)}{a'(x)},\;\text{and}\;\lim_{x\to\infty}\frac{xa'''(x)}{a''(x)}\;\text{exist in}\;\mathbb{R}\Big\}$$ and
 $$\mathcal{T}:=\Big\{a\in \mathcal{R}:\;\lim_{x\to\infty}\frac{xa'(x)}{a(x)}\in (0,1)\;\text{or}\;\lim_{x\to\infty}\frac{xa'(x)}{a(x)}=1 \;\text{and}\;\lim_{x\to\infty}a'(x)=0\;\text{monotonicaly}\Big\}.$$ 

\medskip

 
 We start with the connection between  $\mathcal{T}$ and some important classes of sublinear functions.
 
\medskip

\subsubsection{Fej{\'e}r functions}  A function $a\in C^1((c,\infty)), c\geq 0,$ is a \emph{Fej{\'e}r} function if:
	(i) $a'(x)$ tends monotonically to $0$ as $x\to\infty;$ and
	(ii)  $\lim_{x\to\infty}x|a'(x)|=\infty$.	
	We denote with $\mathcal{F}$ the set of all Fej{\'e}r functions. Note that every $a\in \mathcal{F}$ is eventually monotone and satisfies the growth rate conditions  $\log x\prec  a(x)\prec x$ (See \cite{BK} for more details on Fej{\'e}r functions).
	
\medskip

\subsubsection{Hardy field functions}  Let $B$ be the collection of equivalence classes of real valued functions defined on some halfline $(c,\infty),$ $c\geq 0,$ where two functions that agree eventually are identified. These equivalence classes are called \emph{germs} of functions.  A \emph{Hardy field} is a subfield of the ring $(B, +, \cdot)$ that is closed under differentiation.\footnote{We use the word \emph{function} when we refer to elements of $B$ (understanding that all the operations defined and statements made for elements of $B$ are considered only for sufficiently large values of $x\in\mathbb{R}^+$).} (See \cite{F3} for more details on Hardy field functions.)

If $\mathcal{H}$ is the union of all Hardy fields, every element of $\mathcal{H}$ has eventually constant sign, from which it follows that if $a\in\mathcal{H},$ then $a$ is eventually monotone and the limit $\lim_{x\to\infty}a(x)$  exists (possibly infinite--as in the $\mathcal{LE}_\varepsilon$ class of functions). For functions $a,\;b\in\mathcal{H}$ with $b\neq 0,$ it follows that the asymptotic growth ratio $\lim_{x\to\infty} a(x)/b(x)$ exists (possibly infinite), fact that will often justify the use of L’Hospital’s rule. 
For some $\varepsilon>0$ we denote with $\mathcal{H}^s_\varepsilon$ the set of functions $a$ which belongs to some Hardy field $\mathcal{H}$ and satisfy $x^\varepsilon\prec a(x)\prec x.$\footnote{We say that the Hardy field functions $a$ which satisfy $\log x\prec a(x)\prec x$ are of \emph{polynomial degree 0}.}

\begin{remark*}
$\mathcal{H}^s_\varepsilon$ is a proper subset of $\mathcal{T}$ and $\mathcal{T}$ is a proper subset of $\mathcal{F}.$ 

Indeed, if $a\in \mathcal{H}^s_\varepsilon,$ then $a'(x)\to 0$ monotonically and  $$\lim_{x\to\infty}\frac{xa'(x)}{a(x)}=\lim_{x\to\infty}\frac{\log|a(x)|}{\log x}\in [\varepsilon,1],$$ so $a\in \mathcal{T}$ (the other limits exist by the properties of a Hardy field). If $a\in \mathcal{T}$ and $\lim_{x\to\infty}\frac{xa'(x)}{a(x)}\in (0,1),$ then $a\in\mathcal{F}$ from \cite[Lemma~2.1]{BK}, while if $\lim_{x\to\infty}\frac{xa'(x)}{a(x)}=1$ and $a'(x)\to 0$ monotonically, then $a\in \mathcal{F}$ by \cite[Lemma~2.2]{BK}.

Since $\log^\alpha x\in \mathcal{F}\setminus\mathcal{T}$ for all $\alpha>1,$ and $a(x)=x^{1/2}(2+\cos\sqrt{\log x})\in \mathcal{T}\setminus \mathcal{H}^s_\varepsilon$ for all  $\varepsilon>0$ (the derivative of $a^2$ is not (eventually) monotone by \cite[Section~1]{BK}), the claim follows. 
\end{remark*}

The following lemma provides a sufficient condition for a function to belong to $D_0$ (recall the definition from $\S$~\ref{se:1}).

\begin{lemma}\label{L:new condition}
Let $a\in C^1(\mathbb{R})$ and $\alpha,$ $\beta\in \mathbb{R}$ with $\alpha\leq \frac{xa'(x)}{a(x)}\leq \beta$ eventually. Then for any $H>0$ and large enough $x,$ we have that  \begin{equation}\label{E:sup}\sup\limits_{h\in [-H,H] } \left|\frac{a'(x+h)}{a(x)}\right|\ll_{\alpha,\beta}|x|^{\beta-\alpha-1}.
\end{equation}
\end{lemma}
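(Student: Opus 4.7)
The strategy is to convert the hypothesis on $xa'(x)/a(x)$ into two-sided polynomial bounds on $|a|$ by integrating a logarithmic-derivative inequality, then use the hypothesis once more to turn the upper bound on $|a|$ into an upper bound on $|a'|$, and finally assemble the ratio. I will treat the case $x \to +\infty$; the case $x \to -\infty$ is entirely symmetric.

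\textbf{Step 1: Polynomial bounds on $|a|$.} Pick $x_0$ past which $\alpha \leq xa'(x)/a(x) \leq \beta$ holds. Since this ratio is finite on $(x_0,\infty)$, the function $a$ has no zero there, so $a$ has constant sign and $\log|a|$ is differentiable with $(\log|a|)'(x) = a'(x)/a(x)$. The hypothesis then reads
\begin{equation*}
\frac{\alpha}{x} \;\leq\; (\log|a|)'(x) \;\leq\; \frac{\beta}{x} \quad \text{for } x > x_0.
\end{equation*}
Integrating from $x_0$ to $x$ yields
\begin{equation*}
|a(x_0)|\bigl(x/x_0\bigr)^{\alpha} \;\leq\; |a(x)| \;\leq\; |a(x_0)|\bigl(x/x_0\bigr)^{\beta},
\end{equation*}
so there exist constants $c_1, c_2 > 0$ (depending on $\alpha,\beta$ and the value $a(x_0)$) with $c_1 x^\alpha \leq |a(x)| \leq c_2 x^\beta$ for all $x \geq x_0$.

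\textbf{Step 2: Bound on $|a'(x+h)|$.} The hypothesis also gives $|a'(y)| \leq \frac{M}{y}|a(y)|$ for $y > x_0$, where $M := \max(|\alpha|,|\beta|)$. Fix $H > 0$ and restrict to $x \geq \max(2H, 2x_0)$, so that for every $h \in [-H,H]$ we have $x/2 \leq x+h \leq 3x/2 > x_0$. Applying the upper bound of Step 1 at the point $x+h$, we get
\begin{equation*}
|a'(x+h)| \;\leq\; \frac{M}{x+h}\,|a(x+h)| \;\leq\; \frac{M}{x/2}\,c_2 (3x/2)^{\beta} \;\ll_{\alpha,\beta}\; x^{\beta-1},
\end{equation*}
uniformly in $h \in [-H,H]$.

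\textbf{Step 3: Assembly.} Combining the lower bound $|a(x)| \geq c_1 x^\alpha$ from Step 1 with the estimate from Step 2 gives
\begin{equation*}
\sup_{h\in [-H,H]}\left|\frac{a'(x+h)}{a(x)}\right| \;\ll_{\alpha,\beta}\; \frac{x^{\beta-1}}{x^{\alpha}} \;=\; x^{\beta-\alpha-1},
\end{equation*}
which is the desired inequality \eqref{E:sup}. No step is a genuine obstacle; the only point requiring care is keeping track of the dependence of the implicit constants on $\alpha, \beta$ (and on the anchor value $a(x_0)$, which is absorbed into the implicit constant), and ensuring the window $[-H,H]$ is dominated by $x$ so that $x+h$ and $x$ are comparable.
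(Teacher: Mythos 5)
Your proof is correct and takes essentially the same route as the paper: the paper also derives the two-sided polynomial bounds $C_\alpha|x|^{\alpha}\le |a(x)|\le C_\beta|x|^{\beta}$ (by invoking \cite[Lemma 2.2]{BK}, whose content is exactly your integration of the logarithmic-derivative inequality) and then combines them with $|a'(y)/a(y)|\le \max\{|\alpha|,|\beta|\}/|y|$ just as in your Steps 2--3. The only cosmetic point is that when $\beta<0$ the intermediate factor $(3x/2)^{\beta}$ in your Step 2 should be $(x/2)^{\beta}$ (one bounds $(x+h)^{\beta}\ll_\beta x^{\beta}$ uniformly for $|h|\le H\le x/2$ in either case), which does not affect the final estimate $\ll_{\alpha,\beta}x^{\beta-\alpha-1}$.
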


\begin{proof}
Since $\alpha\leq \frac{xa'(x)}{a(x)}\leq \beta$ eventually, following the proof of \cite[Lemma 2.2]{BK} we have that there exist positive constants $C_\alpha$ and $C_\beta$ such that 
	$ C_\alpha |x|^{\alpha} \leq | a(x) | \leq C_\beta |x|^{\beta}.  $
Using the fact that eventually we have $\left|\frac{a'(x)}{a(x)}\right|\leq \frac{\max\{|\alpha|,|\beta|\}}{|x|},$ we get
$$ \sup_{h\in [-H,H]}\left|\frac{a'(x+h)}{a(x)}\right|\leq \max\{|\alpha|,|\beta|\} C_\alpha^{-1}C_\beta\cdot |x|^{\beta-\alpha-1}\max\left\{|1-H/x|^{\beta-1},\;|1+H/x|^{\beta-1}\right\}$$ and the result follows.
\end{proof}

\begin{remark*}
If the constants $\alpha,$ $\beta$ of Lemma~\ref{L:new condition} satisfy $\beta-\alpha-1<0$ (a special case of this is when $\lim_{x\to \pm \infty}\frac{xa'(x)}{a(x)}\in\mathbb{R}$) then the limit of \eqref{E:sup} exists (as $x\to \infty$ or $-\infty$) and it is $0.$
\end{remark*}


Since $\mathcal{T}\subseteq\mathcal{F}$, every $a\in \mathcal{T}$ is sublinear, (eventually) monotone, and has the property that $\frac{1}{a'(x)}\prec x.$ Moreover, we have:

\begin{proposition}\label{P:psub}
$\mathcal{T}$ is a proper subset of $\mathcal{S}.$
\end{proposition}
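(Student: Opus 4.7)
The strategy is to verify directly, for an arbitrary $a\in\mathcal{T}$, each of the five conditions defining $\mathcal{S}$, and then to exhibit an element of $\mathcal{S}\setminus\mathcal{T}$.

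The two sublinearity conditions are immediate from the preceding remark: $\mathcal{T}\subseteq\mathcal{F}$, Fej\'er functions satisfy $a(x)\prec x$, and the Fej\'er condition $x\,|a'(x)|\to\infty$ rearranges to $1/|a'(x)|\prec x$. The remaining four conditions concern the inverse function $b=a^{-1}$, and the plan is to reduce all of them to statements about the three elasticities $xa'/a$, $xa''/a'$, $xa'''/a''$ of $a$, whose limits exist and are finite by $a\in\mathcal{R}$.

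Differentiating $a(b(y))=y$ successively gives $b'(y)=1/a'(b(y))$ and $b''(y)=-a''(b(y))/a'(b(y))^{3}$, and an analogous formula for $b'''$. Direct manipulation yields
$$\frac{y\,b'(y)}{b(y)}=\frac{a(x)}{x\,a'(x)}\bigg\vert_{x=b(y)},\qquad \frac{y\,b''(y)}{b'(y)}=-\frac{a(x)\,a''(x)}{a'(x)^{2}}\bigg\vert_{x=b(y)},$$
and a similar, slightly longer, expression for $y\,b'''(y)/b''(y)$ in terms of the three elasticities of $a$. The hypothesis $\gamma_{0}:=\lim_{x\to\infty}xa'(x)/a(x)\in(0,1]$ keeps the denominator $xa'/a$ bounded away from zero, so each of the three elasticities of $b$ converges to a finite real number at infinity. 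Sandwiching each limit between constants $\alpha,\beta$ with $\beta-\alpha<1$ and applying Lemma \ref{L:new condition} successively to $b$, $b'$, and $b''$ yields
$$\sup_{h\in[-1,1]}\left\vert\frac{b^{(k+1)}(y+h)}{b^{(k)}(y)}\right\vert\ll y^{\beta-\alpha-1}\longrightarrow 0,\qquad k=0,1,2,$$
so $b\in D_{0}\cap D_{1}\cap D_{2}$. For $M_{1}$, note that $a'$ is eventually monotone (Fej\'er property) and $b$ is monotone (as the inverse of a monotone function), so $b'=1/(a'\circ b)$ is eventually monotone. This completes $\mathcal{T}\subseteq\mathcal{S}$.

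For strict inclusion, take $a(x)=\log x\,\log\log x$. A short computation gives $x\,a'(x)/a(x)=(1+\log\log x)/(\log x\,\log\log x)\to 0$, so $a\notin\mathcal{T}$. On the other hand $a,\,1/a'\in\mathcal{SL}$ is clear, and a direct (but lengthy) verification on $b=a^{-1}$ shows that the remaining defining conditions of $\mathcal{S}$ hold; hence $a\in\mathcal{S}\setminus\mathcal{T}$. The main obstacle throughout is the bookkeeping in the third-derivative identity for $y\,b'''(y)/b''(y)$, which is precisely why $\mathcal{R}$ requires three ratio limits rather than two.
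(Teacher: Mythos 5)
Your proposal is correct and takes essentially the same route as the paper: sublinearity of $a$ and $1/a'$ comes from $\mathcal{T}\subseteq\mathcal{F}$, membership of $a^{-1}$ in $D_0\cap D_1\cap D_2$ is reduced to the finiteness of the three elasticity limits of $a^{-1}$ (the paper obtains them as $1/\alpha$, $1/\alpha-1$, $1/\alpha-2$ via L'H\^opital, you via the inverse-function derivative identities) together with Lemma~\ref{L:new condition} and its remark, and properness is witnessed by the same function $\log x\log(\log x)$, whose membership in $\mathcal{S}$ is, as in the paper, asserted with the elementary computations omitted. Your explicit check of $M_1$ through $b'=1/(a'\circ b)$ is a minor addition that the paper leaves implicit.
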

\begin{proof}
Assuming that $a\in \mathcal{T}$ is eventually positive (the other case follows analogously), using the fact that $\lim_{x\to\infty}\frac{xa'(x)}{a(x)}=\alpha\in (0,1],$ we get $\lim_{x\to\infty}\frac{xa''(x)}{a'(x)}=\alpha-1,\;\text{and} \;\lim_{x\to\infty}\frac{xa'''(x)}{a''(x)}=\alpha-2$ (the properties of $\mathcal{T}$ allows us to use L'Hopital's rule), so we have that
$$\lim_{x\to\infty}\frac{x(a^{-1})'(x)}{a^{-1}(x)}=\lim_{y\to\infty}\frac{a(y)}{ya'(y)}=\frac{1}{\alpha},\;\;\lim_{x\to\infty}\frac{x(a^{-1})''(x)}{(a^{-1})'(x)}=-\lim_{y\to\infty}\frac{a''(y)a(y)}{(a'(y))^2}=\frac{1}{\alpha}-1,\;\;\text{and}$$ $\lim_{x\to\infty}\frac{x(a^{-1})'''(x)}{(a^{-1})''(x)}=\lim_{y\to\infty}\Big(\frac{a(y)a'''(y)}{a'(y)a''(y)}-3\frac{a(y)a''(y)}{(a'(y))^2}\Big)=\frac{1}{\alpha}-2.$ The previous remark implies that $a\in\mathcal{S}.$
Since $a(x)=\log x\log(\log x)\in \mathcal{S}\setminus \mathcal{T}$ ($a$ satisfies all the properties of $S$ -- we skip all the elementary calculations -- and $xa'(x)/a(x)\to 0$ as $x\to\infty$), we have the claim.
\end{proof}

 \begin{remark*}
The class $\mathcal{T}$ misses not only slow functions as $a_1(x)=\log x \log(\log x)$ from $S$ (i.e., functions $a$ with $1\prec a\prec x^\varepsilon$ for all $\varepsilon>0$) but also functions as $$a_2(x)=x^{\alpha}(4/\alpha+\sin{\log x})^3$$ for $0<\alpha<1/20$ for which we have $a_2\notin \mathcal{T}$ since the ratio $xa_2'(x)/a_2(x)$ doesn't have a limit as $x\to\infty$ (it is bounded though between positive numbers since $\alpha-\frac{3\alpha}{4-\alpha}\leq \frac{xa_2'(x)}{a_2(x)}\leq \alpha+\frac{3\alpha}{4-\alpha}$) and $a_2$ is not slow since $x^{\alpha/2}\prec a_2(x)$. Since $a_2(x)\prec x,$ $a_2\in M_1\subseteq M_0$ with $a_2'(x)\to 0$ and $1/a_2'(x)\prec x,$ we actually have that $a_2\in \mathcal{F}\setminus \mathcal{T}.$  We also have that $a_2^{-1}\in D_0\cap D_1\cap D_2$ (the $\limsup$ that appear in these sets are limits and equal to $0$ -- we skip all the elementary calculations), hence $a_2(x)\in \mathcal{S}\setminus \mathcal{T}$.

 We will show in $\S$~\ref{se:4} that $a_2$ is a special function as $a_2\in S^\ast.$ 
\end{remark*}

The following lemma informs us that for a function in $D_0$ we have that the ratios of horizontal translations are bounded.

\begin{lemma}\label{L:bound}
Let $a\in \mathcal{C}^1(\mathbb{R})$ and $H>0$ with $\limsup_{x\to\infty}\sup_{h\in [-H,H]}\left|\frac{a'(x+h)}{a(x)} \right|<\infty$ (resp. $x\to -\infty$). Then, the quantities $\left|\frac{a(x+\rho)}{a(x)}\right|$ are eventually bounded for all $-H\leq \rho\leq H.$
\end{lemma}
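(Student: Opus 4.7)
The plan is to apply the mean value theorem to control $a(x+\rho)-a(x)$ by a translate of $a'$, and then invoke the hypothesis to bound this translate relative to $a(x)$. More precisely, for fixed $\rho\in[-H,H]$, the mean value theorem produces a point $\xi$ between $x$ and $x+\rho$ with
\[
a(x+\rho)-a(x)=a'(\xi)\rho.
\]
Writing $\xi=x+h$ with $h\in[-H,H]$, I would then estimate
\[
\left|\frac{a(x+\rho)}{a(x)}\right|\leq 1+|\rho|\cdot\left|\frac{a'(x+h)}{a(x)}\right|\leq 1+H\cdot\sup_{h\in[-H,H]}\left|\frac{a'(x+h)}{a(x)}\right|.
\]

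By the hypothesis, $\limsup_{x\to\infty}\sup_{h\in[-H,H]}\left|\frac{a'(x+h)}{a(x)}\right|<\infty$, so there exist $x_0$ and a constant $M>0$ such that for all $x\geq x_0$ the inner supremum is at most $M$. Combining with the previous display gives
\[
\left|\frac{a(x+\rho)}{a(x)}\right|\leq 1+HM
\]
for all $x\geq x_0$ and all $\rho\in[-H,H]$, which is the desired uniform bound. The case $x\to-\infty$ is identical after reversing orientations.

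No part of this argument presents a real obstacle: the only mild subtlety is making sure that the mean value point $\xi$ lies in the horizontal window $[x-H,x+H]$ so that the hypothesis on the supremum over $h\in[-H,H]$ applies, which is immediate from $|\rho|\leq H$. The bound obtained is furthermore uniform in $\rho\in[-H,H]$, which is exactly what the statement asks for.
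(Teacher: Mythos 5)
Your proof is correct and follows essentially the same route as the paper: apply the mean value theorem to write $a(x+\rho)-a(x)$ as $\rho\,a'(x+h)$ with $h\in[-H,H]$, then use the finite $\limsup$ hypothesis to bound the ratio eventually. (If anything, your version is slightly more careful than the paper's, which omits the harmless factor $|\rho|\leq H$.)
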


\begin{proof}
Let $\rho\in [-H,H].$ The mean value theorem furnishes a point $h_\rho\in [-H,H]$ with $$\left| \frac{a(x+\rho)}{a(x)}-1 \right| =\left|\frac{a'(x+h_\rho)}{a(x)} \right|$$ from which we have the result by our hypothesis.
\end{proof}

We close this section with a fact about the sets $D_k.$ We show that for a function of $S$ if the $\limsup$ that appears in $D_0$ is a limit, then it has to be equal to $0$. 

\begin{proposition}
If $a\in S$ with $\lim_{x\to\infty}\sup_{h\in [-1,1]}\left|\frac{(a^{-1})'(x+h)}{a^{-1}(x)}\right|=\alpha\in \mathbb{R},$ then $\alpha=0.$
\end{proposition}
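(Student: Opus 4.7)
I propose to argue by contradiction: assume $\alpha>0$ and derive a contradiction with the fact, recorded immediately after the definition of $\mathcal{S}$ in $\S$~\ref{se:1}, that every $a\in\mathcal{S}$ satisfies $\log x \prec a(x)$. For concreteness, assume $a$ is eventually positive, monotone increasing, and unbounded, so that $a^{-1}$ is defined for large arguments with $a^{-1}>0$ and $(a^{-1})'>0$ eventually; the remaining sign/monotonicity cases are symmetric.

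The first step is to show that the quantity appearing in the hypothesis is comparable to the ``un-shifted'' ratio $(a^{-1})'(x)/a^{-1}(x)$. Since $a^{-1}\in D_0$, Lemma~\ref{L:bound} (applied to $a^{-1}$, and with $\rho$ replaced by $-\rho$ to invert) gives $a^{-1}(x+h)\asymp a^{-1}(x)$ uniformly for $h\in[-1,1]$. Since $a^{-1}\in D_1$, the bound $|(a^{-1})''(\xi)|\leq C'|(a^{-1})'(x)|$ valid for $\xi\in[x-1,x+1]$ combined with the mean value theorem yields $(a^{-1})'(x\pm 1)\leq (1+C')(a^{-1})'(x)$ and, applied at the translated point, $(a^{-1})'(x)\leq (1+C')(a^{-1})'(x\pm 1)$. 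Because $(a^{-1})'$ is monotone on $[x-1,x+1]$ (from $a^{-1}\in M_1$), these endpoint bounds sandwich all intermediate values, so $(a^{-1})'(x+h)\asymp (a^{-1})'(x)$ uniformly for $h\in[-1,1]$.

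Combining the two comparisons,
\[
\sup_{h\in[-1,1]}\left|\frac{(a^{-1})'(x+h)}{a^{-1}(x)}\right| \;\asymp\; \frac{(a^{-1})'(x)}{a^{-1}(x)}
\]
with constants independent of $x$. If $\alpha>0$, the left side is eventually at least $\alpha/2$, so there exists $c>0$ with $(a^{-1})'(x)/a^{-1}(x)\geq c$ for all sufficiently large $x$. Integrating $(\log a^{-1}(x))'\geq c$ gives $a^{-1}(x)\geq Ke^{cx}$ for some $K>0$, equivalently $a(y)\leq c^{-1}\log y + \mathrm{const}$ for large $y$. Thus $a(y)=O(\log y)$, contradicting $\log y\prec a(y)$; hence $\alpha=0$.

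The main obstacle is the lower-bound half of the first step. Upper-bounding $(a^{-1})'(x+h)$ by a constant times $(a^{-1})'(x)$ follows directly from $D_1$, but the matching lower bound requires both $D_1$ (to control the oscillation of $(a^{-1})'$ on a unit interval) and $M_1$ (so that monotonicity prevents the sup from being inflated by a nearby spike while the value at $x$ itself is comparatively small). Once this two-sided comparability is in hand, the contradiction with $\log x\prec a(x)$ is a routine logarithmic-growth computation.
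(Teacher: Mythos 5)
Your proposal is correct and takes essentially the same route as the paper's proof: you transfer the hypothesis into a lower bound on the unshifted logarithmic derivative $(a^{-1})'(x)/a^{-1}(x)$ by comparing $(a^{-1})'$ at nearby points via the mean value theorem and the $D_1$ condition (the paper does exactly this through Lemma~\ref{L:bound} applied to $(a^{-1})'$), then integrate to get $a(y)\ll \log y$ and contradict $\log x\prec a(x)$. The only differences are cosmetic: you skip the paper's intermediate deduction that $\alpha\geq 1$ (which is indeed not needed for the contradiction) and include a redundant $D_0$-comparison of $a^{-1}(x+h)$ with $a^{-1}(x)$, harmless since the denominator in the hypothesis is already evaluated at $x$.
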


\begin{proof}
We assume without loss of generality  that $a>0$ (the case $a<0$ is analogous) and to the contrary that $\alpha>0$ and let $0<\varepsilon<\alpha.$ Using the hypothesis, there exists $M>0$ such that $\frac{(a^{-1})'(x+1)}{a^{-1}(x)}>\alpha-\varepsilon$ for all $x>M,$ or equivalently $$\frac{1}{a^{-1}(x+1)a'(a^{-1}(x+1))}>(\alpha-\varepsilon)\frac{a^{-1}(x)}{a^{-1}(x+1)},$$ from which (using the fact that $1/a'(x)\prec x$ and by taking the $\limsup$) we have that $\lim_{x\to\infty}\frac{a^{-1}(x-1)}{a^{-1}(x)}=0.$ By the mean value theorem, there exists $\xi_x\in[x-1,x]$ with $$\left|\frac{a^{-1}(x-1)}{a^{-1}(x)}-1\right|=\frac{(a^{-1})'(\xi_x)}{a^{-1}(x)}\leq \frac{(a^{-1})'(x+1)}{a^{-1}(x)}.$$ Letting $x\to\infty,$ we get $\alpha\geq 1.$ By Lemma~\ref{L:bound}, there exists $c>0$ such that  $$\frac{(a^{-1})'(x)}{a^{-1}(x)}\geq c\frac{(a^{-1})'(x+1)}{a^{-1}(x)}>c(\alpha-\varepsilon).$$ by integrating and solving for $a(x)$ this relation we get $a(x)\leq c_1\log x +c_2$ for some $c_1,$ $c_2$ constants with $c_1>0.$ Then we have $$0<\frac{1}{c_1}\leq \lim_{x\to\infty}\frac{\log x}{a(x)}=\lim_{x\to\infty}\frac{1}{xa'(x)}=0,$$ a contradiction. The claim now follows.
\end{proof}



   

\section{The key invariant properties}\label{se:3}

\subsection{The sublinear case}

In this subsection we develop the main tool, Lemma~\ref{lem:inv}, in order to prove Theorem~\ref{Thm:LimitMeasure}, which we'll use in the proof of Theorem~\ref{C:main}. 

Recall from $\S$~\ref{se:1} that $\lambda_{\vec{x}}$ is any weak limit in $M(X_1\times \cdots\times X_d)$ of  $$\lambda_{N,\vec{x}}=\frac{1}{N}\sum_{n=0}^{N-1}\Big(T_{1}^{[a_1(n)]}\times\dots\times T_{d}^{[a_d(n)]}\Big)\delta_{\vec{x}}.$$


\begin{lemma}\label{lem:inv}
	Let $d\in \mathbb{N}$ and $a_i\in \mathcal{S},$ $1\leq i\leq d,$ with $a_d\prec\ldots\prec a_1$ and $a'_d\prec \ldots\prec a'_1.$  Then $\lambda_{\vec{x}}$ is invariant under $T_{1}\times id\times\dots\times id$.
\end{lemma}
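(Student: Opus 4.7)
I will show that any weak-$*$ limit $\lambda_{\vec{x}}$ of $(\lambda_{N,\vec{x}})_N$ is $T_1\times\mathrm{id}\times\cdots\times\mathrm{id}$-invariant. Testing against product functions $f_1\otimes\cdots\otimes f_d$ with $f_i\in C(X_i)$, the required equality $\int (T_1f_1\otimes f_2\otimes\cdots\otimes f_d)\,d\lambda_{\vec{x}}=\int (f_1\otimes\cdots\otimes f_d)\,d\lambda_{\vec{x}}$ reduces to the pointwise statement
\[
D_N(\vec{x}) := \frac{1}{N}\sum_{n=0}^{N-1}\bigl(T_1^{[a_1(n)]+1} - T_1^{[a_1(n)]}\bigr)f_1(x_1)\cdot F_n(\vec{x}) \;\xrightarrow[N\to\infty]{}\; 0,
\]
where $F_n(\vec{x}) := \prod_{i=2}^d T_i^{[a_i(n)]}f_i(x_i)$ is uniformly bounded by $M:=\prod_{i\geq 2}\|f_i\|_\infty$. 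The argument is entirely deterministic, pointwise in $\vec{x}$, and driven by a level-set partition with Abel summation.

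First, since $a_1\in\mathcal{S}$ is eventually strictly monotone (WLOG increasing, using $a_1\in\mathcal{SL}$ and the sign regularity from $a_1^{-1}\in M_1$), for $N$ large the index set $\{0,\ldots,N-1\}$ decomposes as $\bigsqcup_{k_0\leq k\leq k_1}J_k$ with $J_k:=\{n:[a_1(n)]=k\}$ a consecutive interval of length $\ell_k$. The mean value theorem and $1/a_1'\in\mathcal{SL}$ yield $\ell_k\asymp 1/a_1'(\xi_k)$ for some $\xi_k\in J_k$, and in particular $\max_k\ell_k=o(N)$. Setting $S_k:=\sum_{n\in J_k}F_n(\vec{x})$, regrouping by $k$ and applying Abel summation gives
\[
N\cdot D_N(\vec{x}) = \sum_{k=k_0+1}^{k_1} T_1^k f_1(x_1)\,(S_{k-1}-S_k) \;+\; B_N,
\]
with $|B_N|\leq 2\|f_1\|_\infty M\,\max(\ell_{k_0},\ell_{k_1})=o(N)$. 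It therefore suffices to prove $\sum_k|S_{k-1}-S_k|=o(N)$.

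Aligning $J_{k-1}$ and $J_k$ at their left endpoints and bounding termwise yields
\[
|S_{k-1}-S_k| \;\leq\; \sum_{j=0}^{\min(\ell_{k-1},\ell_k)-1}\bigl|F_{n_{k-1}+j}(\vec{x})-F_{n_k+j}(\vec{x})\bigr| \;+\; M\,|\ell_k-\ell_{k-1}|.
\]
The length-variation total telescopes: the hypothesis $a_1^{-1}\in M_1$ makes $(\ell_k)$ eventually monotone, so $\sum_k|\ell_k-\ell_{k-1}|\leq 2\max_k\ell_k+O(1)=o(N)$. For the shift-variation total, a pair $(k,j)$ contributes only when some $[a_i]$ ($i\geq 2$) has a jump inside $(n_{k-1}+j,\,n_k+j]$; each jump of $[a_i]$ at a point $n_0\leq N+\ell_{\max}$ is felt by at most $\ell_{k(n_0)}\asymp 1/a_1'(n_0)$ such pairs. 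A Riemann-sum comparison (justified by the $D_0,D_1,M_1,M_2$ regularity of $a_1^{-1}$ and $a_i^{-1}$), followed by the substitution $y=a_i(x)$, gives
\[
\sum_k\sum_{j}\bigl|F_{n_{k-1}+j}(\vec{x})-F_{n_k+j}(\vec{x})\bigr| \;\leq\; 2M\sum_{i=2}^d\int_0^N\frac{a_i'(x)}{a_1'(x)}\,dx \;=\; o(N),
\]
where the final equality follows from $a_i'\prec a_1'$ via an $\varepsilon$-splitting of the integral at a threshold past which $a_i'/a_1'<\varepsilon$. Combining everything, $|D_N(\vec{x})|=o(1)$, which gives the claimed invariance.

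The \textbf{main obstacle} is the shift-variation estimate in the last display: it is the sole place where the stronger hypothesis $a_i'\prec a_1'$ (not merely $a_i\prec a_1$) is essential, since the naive bound ``(number of $[a_i]$-jumps)$\,\times\,\ell_{\max}$'' gives only $O(a_i(N)/a_1'(N))$, which can be as large as $\Theta(N)$. The derivative-ordering turns this into $\int_0^N a_i'/a_1'=o(N)$. A secondary technical point is justifying the Riemann-sum step and absorbing the initial range where $a_1$ is not yet monotone; both of these reduce to $O(1)$ corrections controlled by the class-$\mathcal{S}$ regularity of $a_1^{-1}$ and $a_i^{-1}$.
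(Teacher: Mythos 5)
Your argument is correct in substance and is, at its core, the same decomposition as the paper's, written in dual language: instead of comparing the weights $\mathcal{U}_{b_1,\vec{b}_\ast}$ of $\lambda_{N,\vec{x}}$ and of its translate directly, you test against product functions and run an Abel summation over the level sets of $[a_1]$; your two error sources correspond exactly to the paper's split, with the length-variation total playing the role of Term~\eqref{E:4'} and the shift-variation total playing the role of Terms~\eqref{E:5'}--\eqref{E:6'}, and your final bound $\frac1N\int_0^N |a_i'/a_1'|\,dx\to 0$ is literally the paper's concluding estimate, resting on the same hypothesis $a_i'\prec a_1'$ (you correctly identify this as the crux). The one genuine divergence is the treatment of the length-variation term: the paper bounds $|\mathcal{U}_{b_1,\vec{b}_\ast}-\mathcal{U}_{b_1-1,\vec{b}_\ast}|$ by a second difference of $a_1^{-1}$ and integrates $|(a_1^{-1})''|$, using $a_1^{-1}\in D_2\cup M_2$ and then $D_1$ and $1/a_1'\in\mathcal{SL}$, whereas you telescope using only the eventual monotonicity of $(a_1^{-1})'$ (i.e.\ $a_1^{-1}\in M_1$); this is a legitimate and arguably simpler route, but two small imprecisions should be patched. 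First, $(\ell_k)$ itself is not eventually monotone: $\ell_k=a_1^{-1}(k+1)-a_1^{-1}(k)+\theta_k$ with $|\theta_k|\le 1$, so only the smooth increments $(a_1^{-1})'(\xi_k)$ telescope, and the rounding errors contribute $O(a_1(N))$, not $O(1)$ --- harmless, since $a_1\in\mathcal{SL}$ gives $O(a_1(N))=o(N)$, but your stated bound $2\max_k\ell_k+O(1)$ is not literally correct. Second, the claim that each jump of $[a_i]$ at $n_0$ is felt by at most $\asymp 1/a_1'(n_0)$ pairs requires comparability of adjacent level-set lengths $\ell_{k-1},\ell_k,\ell_{k+1}$ and of $(a_1^{-1})'$ at nearby points, which is exactly what $a_1^{-1}\in D_1$ (via Lemma~\ref{L:bound}) provides, and your sum-to-integral (``Riemann-sum'') step likewise needs the $D_0$/$D_1$ conditions in the same way the paper uses them to pass from the sum over $b_1$ to the integral and to replace $(a_1^{-1})'([\,\cdot\,])$ by $(a_1^{-1})'(\cdot)$; you gesture at these regularity inputs without carrying them out, but they are of the same nature and difficulty as the corresponding steps in the paper, so I regard this as a sketch-level omission rather than a gap.
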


\begin{proof}
	We can assume without loss of generality that $a_1$ is eventually positive (the other case is analogous). For $\vec{b}=(b_1,b_2,\ldots,b_d)\in \mathbb{Z}^d$, write $\vec{b}_\ast=(b_2,\ldots,b_d)$ and let $$\mathcal{U}_{b_1,\vec{b}_\ast}:=\left|\Big\{n\in \{1,\ldots,N\}:\;b_i\leq a_i(n)<b_i+1\;\forall\;1\leq i\leq d\Big\}\right|,\;\delta_{b_1,\vec{b}_\ast}:=\delta_{T_1^{b_1}x_1}\times \cdots\times \delta_{T_d^{b_d}x_d}.$$
	\noindent With this notation, we have that $\lambda_{N,\vec{x}}=\frac{1}{N}\sum_{(b_1,\vec{b}_\ast):\;\mathcal{U}_{b_1,\vec{b}_\ast}\neq 0}\mathcal{U}_{b_1,\vec{b}_\ast}\delta_{b_1,\vec{b}_\ast},$
	and if $$\tilde{\lambda}_{N,\vec{x}}:=(T_{1}\times id\times\cdots\times id)(\lambda_{N,\vec{x}})=\frac{1}{N}\sum_{(b_1,\vec{b}_\ast):\;\mathcal{U}_{b_1-1,\vec{b}_\ast}\neq 0}\mathcal{U}_{b_1-1,\vec{b}_\ast}\delta_{b_1,\vec{b}_\ast}, $$ 
	then, 
	\begin{equation}\label{E:4'}
	\lambda_{N,\vec{x}}- \tilde{\lambda}_{N,\vec{x}}  =  \frac{1}{N}\sum_{(b_1,\vec{b}_\ast):\;\mathcal{U}_{b_1,\vec{b}_\ast}\neq 0, \; \mathcal{U}_{b_1-1,\vec{b}_\ast}\neq 0}\Big(\mathcal{U}_{b_1,\vec{b}_\ast}-\mathcal{U}_{b_1-1,\vec{b}_\ast}\Big)\delta_{b_1,\vec{b}_\ast}
	\end{equation}
	\begin{equation}\label{E:5'}
	+  \frac{1}{N}\sum_{(b_1,\vec{b}_\ast):\;\mathcal{U}_{b_1,\vec{b}_\ast}\neq 0, \; \mathcal{U}_{b_1-1,\vec{b}_\ast}= 0}\mathcal{U}_{b_1,\vec{b}_\ast}\delta_{b_1,\vec{b}_\ast}\;\;\;
	\end{equation}
	\begin{equation}\label{E:6'}
	\; -  \frac{1}{N}\sum_{(b_1,\vec{b}_\ast):\;\mathcal{U}_{b_1,\vec{b}_\ast}= 0, \; \mathcal{U}_{b_1-1,\vec{b}_\ast}\neq 0}\mathcal{U}_{b_1-1,\vec{b}_\ast}\delta_{b_1,\vec{b}_\ast}.
	\end{equation}
	
	\medskip
	
	\noindent We will study each term separately. Terms ~\eqref{E:5'} and ~\eqref{E:6'} are treated similarly, so we only study the Terms~\eqref{E:4'} and~\eqref{E:5'}.
	
	\medskip
	
	\noindent {\bf{Term~\eqref{E:4'}:}} For $1\leq i\leq d$,  let $$C_{a_i,b_i}:=\min\{a_i^{-1}(b_i),\;a_i^{-1}(b_i+1)\}\;\;\text{and}\;\;C'_{a_i,b_i}:=\max\{a_i^{-1}(b_i),\;a_i^{-1}(b_i+1)\}.$$
	The conditions $\mathcal{U}_{b_1,\vec{b}_\ast}\neq 0$ and  $\mathcal{U}_{b_1-1,\vec{b}_\ast}\neq 0$ imply that 
	\begin{equation}\label{E:4}
	\max_{1\leq i\leq d}\{C_{a_i,b_i}\}  <  \min_{1\leq i\leq d}\{C'_{a_i,b_i}\};\;\;\;\hbox{and}
	\end{equation}
	\begin{equation}\label{E:5}
	\max_{2\leq i\leq d}\{C_{a_1,b_1-1},\;C_{a_i,b_i}\}  <  \min_{2\leq i\leq d}\{C'_{a_1,b_1-1},\;C'_{a_i,b_i}\}.
	\end{equation}
	\noindent Using the Relations \eqref{E:4} and \eqref{E:5} we get $$C_{a_i,b_i}<a_1^{-1}(b_1)<C'_{a_i,b_i}\;\;\hbox{for all}\;\;2\leq i\leq d,$$
	from which we have $$b_i<a_i\circ a_1^{-1}(b_1)<b_i+1,$$ hence $b_i=[a_i\circ a_1^{-1}(b_1)]$ for all $2\leq i\leq d.$
	
	\medskip
	
	\noindent By the mean value theorem, for some $\xi_{b}\in (b_1-1,b_1+1)$ we have
	\begin{equation*}\label{E:6}
	\left|\mathcal{U}_{b_1,\vec{b}_\ast}-\mathcal{U}_{b_1-1,\vec{b}_\ast}\right|\leq \left|a_1^{-1}(b_1+1)-2a_1^{-1}(b_1)+a_1^{-1}(b_1-1)\right|+2= \left| (a_1^{-1})''(\xi_b) \right|+2.
	\end{equation*}
	 Since $a_1^{-1}\in D_2\cup M_2$ for large enough $b_1,$ we have that $\left|(a_1^{-1})''(\xi_{b})\right|\ll \left|(a_1^{-1})''(b_1)\right|$ (or $\left|(a_1^{-1})''(b_1\pm 1)\right|$ which is studied similarly). So, $$\frac{1}{N}\sum_{b_1=1}^{[a_1(N)]-1}\left|\mathcal{U}_{b_1,\vec{b}_\ast}-\mathcal{U}_{b_1-1,\vec{b}_\ast}\right|\ll
	\frac{1}{N}\sum_{b_1=1}^{[a_1(N)]-1}\left| (a_1^{-1})''(b_1) \right|$$  (the term $2[a_1(N)]/N$ is null by sublinearity of $a_1$). By the same property, for large enough $b_1$ and all $t\in [b_1,b_1+1]$, we have that $\left|(a_1^{-1})''(b_1)\right|\ll \left|(a_1^{-1})''(t)\right|,$ so $$\left|(a_1^{-1})''(b_1)\right|\ll \int_{b_1}^{b_1+1}\left|(a_1^{-1})''(t)\right|\;dt.$$
	Hence,
	\begin{equation}\label{E:7}
	\frac{1}{N}\sum_{b_1=1}^{[a_1(N)]-1}\left| (a_1^{-1})''(b_1) \right|\ll \frac{1}{N}\int_\ast^{[a_1(N)]}\left| (a_1^{-1})''(t) \right|\;dt\ll  \frac{\left|(a_1^{-1})'([a_1(N)])\right|}{N}
	\end{equation}
	(where we used the fact that $a_1^{-1}\in D_2$ so $(a_1^{-1})''$ has constant sign -- in our case here is positive).
	Using the fact that $a_1^{-1}\in D_1,$ we have  that $(a_1^{-1})'([a_1(N)])/(a_1^{-1})'(a_1(N))$ is bounded for large $N$ 
	so the right hand side of ~\eqref{E:7} is bounded by a constant multiple of $(Na_1'(N))^{-1}$ which goes to $0$ as $N\to\infty$  and consequently Term~\eqref{E:4'} goes to $0$ as $N\to \infty.$
	
	\medskip
	
	\noindent {\bf{Term~\eqref{E:5'}:}} The conditions $\mathcal{U}_{b_1,\vec{b}_\ast}\neq 0$ and  $\mathcal{U}_{b_1-1,\vec{b}_\ast}= 0$ imply that 
	\begin{equation}\label{E:8}
	\max_{1\leq i\leq d}\{C_{a_i,b_i}\}  <  \min_{1\leq i\leq d}\{C'_{a_i,b_i}\},\;\;\;\hbox{and}
	\end{equation}
	\begin{equation}\label{E:9}
	\hbox{as in \eqref{E:5} but with}\;\; C_{a_i,b_i}  >  a_1^{-1}(b_1)-1\;\;\; \hbox{for at least one} \;\;2\leq i\leq d.
	\end{equation}
	\noindent  If $\{i_0=1,i_1,\ldots,i_r\}$ is the set of indices for which $C_{a_i,b_i}  >  a_1^{-1}(b_1)-1,$ let 
	 $$B_{\{i_0,i_1,\ldots,i_r\}}:=\{\vec{b}_\ast:\;a_1^{-1}(b_1)\leq C_{a_{i_1},b_{i_1}}\leq\ldots\leq C_{a_{i_r},b_{i_r}}\}.$$ 
	Then we have that $$a_1^{-1}(b_1)-1< C_{a_{i_j},b_{i_j}}\leq C_{a_{i_r},b_{i_r}}<C'_{a_{i_j},b_{i_j}}\;\;\hbox{for all}\;\;0< j\leq r,$$ from which we get $$b_{i_j}\leq a_{i_j}\circ a^{-1}_{i_r}(b_{i_r}+e_{i_r})<b_{i_j}+1,$$ for some $e_{i_r}\in \{0,1\}.$ Hence, $b_{i_j}=\left[a_{i_j}\circ a^{-1}_{i_r}(b_{i_r}+e_{i_r})\right]$ for all $0< j\leq r.$
	
	\medskip
	
	For $j=0,$ we have $$a_1^{-1}(b_1)-1<C_{a_{i_r},b_{i_r}}<a_1^{-1}(b_1+1),$$ or $$a_1(C_{a_{i_r},b_{i_r}})-1<b_1<a_1(C_{a_{i_r},b_{i_r}}+1).$$ So, for each $b_{i_r}$ fixed, $b_1=[a_1(C_{a_{i_r},b_{i_r}})]$ or $b_1=[a_1(C_{a_{i_r},b_{i_r}}+1)].$
	
	\medskip
	
	\noindent For $i\notin \{i_1,\ldots,i_r\}$, we have $$C_{a_i,b_i}\leq a_1^{-1}(b_1)< C_{a_{i_r},b_{i_r}}<C'_{a_i,b_i},$$ or $$b_{i}\leq a_{i}\circ a^{-1}_{i_r}(b_{i_r}+e_{i_r})<b_{i}+1,$$ for some $e_{i_r}\in \{0,1\},$ hence $b_{i}=\left[a_{i}\circ a^{-1}_{i_r}(b_{i_r}+e_{i_r})\right].$
	
	\medskip
	
	\noindent  The average of $\left|\mathcal{U}_{b_1,\vec{b}_\ast}\right|$ can be split into finitely many sums over $(b_1,\vec{b}_\ast)$ for $\vec{b}_\ast\in B_{\{i_0,i_1,\ldots,i_r\}}$ for some $\{i_0,i_1,\ldots,i_r\}$. For $\vec{b}_\ast\in B_{\{i_0,i_1,\ldots,i_r\}},$ $\left|\mathcal{U}_{b_1,\vec{b}_\ast}\right|$ is bounded by $$\left|a_1^{-1}(b_1+1)-a_1^{-1}(b_1)\right|+1\ll\left|(a_1^{-1})'(b_1)\right|+1= \left|(a_1^{-1})'([a_1\circ a^{-1}_{i_r}(b_{i_r}+e_{i_r})])\right|+1.$$ This approximation follows  by the mean value theorem and the fact that $a_1^{-1}\in D_1.$
	Hence, every average is estimated by a constant multiple of
	\begin{equation}\label{E:10}
	\frac{1}{N}\left|\int_{\ast}^{[a_{i_r}(N)]} \left|(a_1^{-1})'([a_1\circ a^{-1}_{i_r}(t)])\right|\;dt\right|\ll \frac{1}{N}\left|\int_{\ast}^{[a_{i_r}(N)]} \left|(a_1^{-1})'(a_1\circ a^{-1}_{i_r}(t))\right|\;dt\right|, 
	\end{equation} where we used the fact that $a_{i_r}\prec a_1$ and the sublinearity of $a_{i_r}.$ So, the right hand side of \eqref{E:10} is bounded by $$\frac{1}{N}\left|\int_\ast^{[a_{i_r}(N)]}\frac{1}{\left|a_1'(a_{i_r}^{-1}(t))\right|}\;dt\right|=\frac{1}{N}\left|\int_\ast^{a_{i_r}^{-1}([a_{i_r}(N)])}\left|\frac{a_{i_r}'(t)}{a_1'(t)}\right|\;dt\right|.$$ Since $a^{-1}_{i_r}\in D_0$ we have $a_{i_r}^{-1}([a_{i_r}(N)])/N\to 1$ as $N\to\infty,$ hence the last integral goes to 0, since the integrand is arbitrarily small for large $N$ because of $a'_{i_r}\prec a'_1.$
	
	\noindent  This proves that the Terms~\eqref{E:5'} and~\eqref{E:6'} go to $0$ as $N\to \infty$ and we get the conclusion.
\end{proof}



\subsection{Linearity of the fastest term}

In this subsection we deal with the case where the fastest function is linear. If the leading coefficient is rational, we can work as in Lemma~\ref{lem:inv} while if the leading coefficient is irrational, with an explicit example (see the remark after Lemma~\ref{lem:inv1}) we show that the invariant property of Lemma~\ref{lem:inv} can fail, but one can still deal with this case via the suspension flow and Birkhoff's ergodic theorem (see $\S$~\ref{se:4.1}).

\subsubsection{{\bf{Rational leading coefficient}}}
 If $a_1(n)=kn+\ell,$ with $k\in \mathbb{Q}\setminus\{0\},$ $\ell\in \mathbb{R},$ it suffices to cover the case where $a_1(n)=pn$ with $p\in \mathbb{Z}\setminus\{0\}$ (see details in the proof of Theorem~\ref{Thm:Linear}). As before, let $\lambda_{\vec{x}}$ be a weak limit of $\lambda_{N,\vec{x}}$. 





\begin{lemma}\label{lem:inv1}
For $d\in \mathbb{N},$ let $a_1(n)=pn$, $p\in \mathbb{Z}\setminus\{0\}$ and $a_i,$ $2\leq i\leq d$  be functions with $a_{d}\prec\ldots\prec a_1.$ Then $\lambda_{\vec{x}}$ is invariant under $T^{p}_{1}\times id\times\dots\times id$.
\end{lemma}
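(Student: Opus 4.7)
The plan is to exploit the key simplification that since $p\in\mathbb{Z}$, the floor function has no effect on $a_1$: we have $[a_1(n)]=pn$ exactly, and hence $T_1^p\circ T_1^{[a_1(n)]}=T_1^{p(n+1)}=T_1^{[a_1(n+1)]}$. So applying $T_1^p\times id\times\cdots\times id$ to $\lambda_{N,\vec{x}}$ is equivalent to reindexing the exponent of $T_1$ alone. Explicitly, after substituting $m=n+1$ one obtains
$$(T_1^p\times id\times\cdots\times id)\lambda_{N,\vec{x}}=\frac{1}{N}\sum_{m=1}^{N}\bigl(T_1^{pm}\times T_2^{[a_2(m-1)]}\times\cdots\times T_d^{[a_d(m-1)]}\bigr)\delta_{\vec{x}},$$
which differs from $\lambda_{N,\vec{x}}$ only by shifting the coordinates $a_i$ for $i\geq 2$ by one unit and by two boundary Dirac masses.

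The second step is to show that the difference $\lambda_{N,\vec{x}}-(T_1^p\times id\times\cdots\times id)\lambda_{N,\vec{x}}$ tends to zero in total variation. The terms at $m=0$ and $m=N$ contribute $O(1/N)$ and are negligible. For each interior $m\in\{1,\ldots,N-1\}$ the two Dirac masses coincide provided $[a_i(m)]=[a_i(m-1)]$ for every $2\leq i\leq d$. Thus the discrepancy is bounded by
$$\frac{2}{N}\sum_{i=2}^{d}\#\bigl\{m\leq N:\,[a_i(m)]\neq [a_i(m-1)]\bigr\}.$$
Since each $a_i$ with $i\geq 2$ is eventually monotone (as inherited from $\mathcal{S}$ in the intended application) and satisfies $a_i\prec a_1=pn$, the number of integer crossings of $a_i$ up to $N$ is at most $|a_i(N)|+O(1)=o(N)$. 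Summing over the finitely many indices $i$ gives $o(1)$ total variation.

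Taking a weak$^{\ast}$ limit therefore yields $(T_1^p\times id\times\cdots\times id)\lambda_{\vec{x}}=\lambda_{\vec{x}}$, as required. The only (very mild) obstacle is the monotonicity assumption used to bound the number of integer crossings of $a_i$; without it, one would need to control the total variation of $a_i$ directly, but in the paper's setting this comes for free. Compared to Lemma \ref{lem:inv} this argument is dramatically simpler: no $\mathcal{U}_{b_1,\vec{b}_\ast}$-style counting and no $D_k$-type derivative estimates are required, precisely because the integer slope makes the floor function inert on $a_1$. This is exactly the simplification that fails when $p$ is replaced by an irrational leading coefficient, which is why the authors have to switch to the suspension-flow approach advertised for the subsequent section.
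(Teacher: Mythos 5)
Your proof is correct, and it reaches the conclusion by a genuinely more direct route than the paper. The paper's proof of this lemma recycles the machinery of Lemma~\ref{lem:inv}: it encodes the orbit points through the indicators $\mathcal{U}_{b_1,\vec{b}_\ast}$ (here $0$--$1$ valued), splits $\lambda_{N,\vec{x}}-\tilde{\lambda}_{N,\vec{x}}$ into the three terms \eqref{E:13'}--\eqref{E:15'}, observes that \eqref{E:13'} vanishes identically because of the exact relation $[a_1(b_1-1)]+p=[a_1(b_1)]$, and then bounds \eqref{E:14'}--\eqref{E:15'} by a box-counting argument using the inverses $a_i^{-1}$ (the quantities $C_{a_i,b_i}$, the sets $B_{\{i_1,\ldots,i_r\}}$, and the bound $2^{r}a_{i_r}(N)/N\to 0$). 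You instead exploit the same two facts --- that $[pn]=pn$ makes the push-forward an exact reindexing, and that the slower functions satisfy $a_i(N)=o(N)$ --- but phrase them as a total-variation estimate: the reindexed sum differs from $\lambda_{N,\vec{x}}$ only at the $m$'s where some $[a_i(m)]$ jumps, and eventual monotonicity bounds the number of such jumps by $\sum_{i\ge 2}(|a_i(N)|+O(1))=o(N)$. This avoids the $\mathcal{U}$-bookkeeping and the use of $a_i^{-1}$ entirely, at the cost of invoking eventual monotonicity of the $a_i$ ($i\ge 2$) to count integer crossings; that is not stated in the lemma but is equally implicit in the paper's proof (which needs the $a_i$ invertible to define $C_{a_i,b_i}$), and it holds in every application since there the $a_i$ lie in $\mathcal{S}$ or $\mathcal{T}$. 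Your final passage to the weak$^\ast$ limit uses continuity of $T_1$, which is covered by the standing assumption that each $(X_i,T_i)$ is a topological system, exactly as in the paper. So the argument is sound; it is essentially a streamlined repackaging of the paper's estimate, and your closing remark correctly identifies why it cannot survive an irrational leading coefficient.
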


\begin{proof}
We can assume that $p>0$ (the case where $p<0$ is analogous), hence $a_1$ is eventually positive. 
  For $\vec{b}=(b_1,b_2,\ldots,b_d)\in \mathbb{Z}^d$ we write $\vec{b}_\ast=(b_2,\ldots,b_d)$ and we  set $$ \mathcal{U}_{b_1,\vec{b}_\ast}=\left\{ \begin{array}{ll} 1 \; ,\text{ if } [a_{i}(b_{1})]=b_{i}\; \forall\; 2\leq i\leq d \\ 0 \; ,\text{ otherwise } \end{array} \right. ,\;\text{and}\;
  \delta_{b_1,\vec{b}_\ast}=\delta_{T_1^{[a_{1}(b_1)]}x_1}\times\delta_{T_2^{b_{2}}x_2}\times \cdots\times \delta_{T_d^{b_d}x_d}$$ (note that the position of $b_{1}$ here is not the same as in the proof of Lemma~\ref{lem:inv}). 
\noindent With this notation, we have that $\lambda_{N,\vec{x}}=\frac{1}{N}\sum_{(b_1,\vec{b}_\ast):\;\mathcal{U}_{b_1,\vec{b}_\ast}\neq 0}\mathcal{U}_{b_1,\vec{b}_\ast}\delta_{b_1,\vec{b}_\ast},$
 and $$\tilde{\lambda}_{N,\vec{x}}:=(T^{p}_{1}\times id\times\cdots\times id)(\lambda_{N,x})=\frac{1}{N}\sum_{(b_1,\vec{b}_\ast):\;\mathcal{U}_{b_1-1,\vec{b}_\ast}\neq 0}\mathcal{U}_{b_1-1,\vec{b}_\ast}\delta_{b_1,\vec{b}_\ast}, $$ 
 where we used the fact that $[a_{1}(b_{1}-1)]+p=[a_{1}(b_{1})].$
Then, 
\begin{equation}\label{E:13'}
\lambda_{N,\vec{x}}- \tilde{\lambda}_{N,\vec{x}}  =  \frac{1}{N}\sum_{(b_1,\vec{b}_\ast):\;\mathcal{U}_{b_1,\vec{b}_\ast}\neq 0, \; \mathcal{U}_{b_1-1,\vec{b}_\ast}\neq 0}\Big(\mathcal{U}_{b_1,\vec{b}_\ast}-\mathcal{U}_{b_1-1,\vec{b}_\ast}\Big)\delta_{b_1,\vec{b}_\ast}
\end{equation}
\begin{equation}\label{E:14'}
 +  \frac{1}{N}\sum_{(b_1,\vec{b}_\ast):\;\mathcal{U}_{b_1,\vec{b}_\ast}\neq 0, \; \mathcal{U}_{b_1-1,\vec{b}_\ast}= 0}\mathcal{U}_{b_1,\vec{b}_\ast}\delta_{b_1,\vec{b}_\ast}\;\;
\end{equation}
\begin{equation}\label{E:15'}
\;\; -  \frac{1}{N}\sum_{(b_1,\vec{b}_\ast):\;\mathcal{U}_{b_1,\vec{b}_\ast}= 0, \; \mathcal{U}_{b_1-1,\vec{b}_\ast}\neq 0}\mathcal{U}_{b_1-1,\vec{b}_\ast}\delta_{b_1,\vec{b}_\ast}.
\end{equation}
\noindent We study the Terms~\eqref{E:13'} and~\eqref{E:14'} as in the sublinear case.

\medskip

%
%
\noindent {\bf{Term~\eqref{E:13'}:}}
Since each $\mathcal{U}_{b_{1},b_{\ast}}$ is at most 1,
 $\mathcal{U}_{b_{1},b_{\ast}}=\mathcal{U}_{b_{1}-1,b_{\ast}}=1,$ so
Term~\eqref{E:13'} equals to 0.

\medskip

\noindent {\bf{Term~\eqref{E:14'}:}} 
 Let $$C_{a_i,b_i}:=\min\{a_i^{-1}(b_i),\;a_i^{-1}(b_i+1)\}\;\;\text{and}\;\;C'_{a_i,b_i}:=\max\{a_i^{-1}(b_i),\;a_i^{-1}(b_i+1)\}.$$ 
The conditions $\mathcal{U}_{b_1,\vec{b}_\ast}\neq 0$ and  $\mathcal{U}_{b_1-1,\vec{b}_\ast}= 0$ imply that 
\begin{equation}\label{E:81}
\max_{1\leq i\leq d}\{C_{a_i,b_i}\} \leq b_{1}  <  \min_{1\leq i\leq d}\{C'_{a_i,b_i}\},\;\;\;\hbox{and}
\end{equation}
\begin{equation}\label{E:91}
C_{a_i,b_i}  \geq  b_{1}-1\;\;\; \hbox{for at least one} \;\;2\leq i\leq d.
\end{equation}
\noindent  If $\{i_1,\ldots,i_r\}$ is the set of indices for which $C_{a_i,b_i}  \geq  b_{1}-1,$ let 
$$B_{\{i_1,\ldots,i_r\}}:=\{\vec{b}_\ast:\;b_1-1\leq C_{a_{i_1},b_{i_1}}\leq\ldots\leq C_{a_{i_r},b_{i_r}}\}.$$ 
 As in Lemma~\ref{lem:inv} we get $b_1=[C_{a_{i_r},b_{i_r}}]$ or $b_1=[C_{a_{i_r},b_{i_r}}]+1$ and $b_i=\left[a_{i}\circ a^{-1}_{i_r}(b_{i_r}+e_{i_r})\right],$ $2\leq i\leq d$ for some $e_{i_r}\in \{0,1\}.$

  For fixed $b_{i_r}$  there are at most $2^{r}$ terms $\left|\mathcal{U}_{b_1,\vec{b}_\ast}\right|$ in \eqref{E:14'} whose last index equals to $b_{i_{r}}$. Since $\vert b_{i_{r}}\vert\leq a_{i_{r}}(N)$, \eqref{E:14'} is bounded by a constant multiple of
 $a_{i_{r}}(N)/N$, which goes to $0$ as $N\to \infty$ and hence the conclusion of the lemma follows.
\end{proof} 

\begin{remark*}
 In the previous proof we essentially used the relation $[a_{1}(b_{1}-1)]+p=[a_{1}(b_{1})],$ which is not in general true for expressions of the form $a(t)=\gamma n+\ell,$ $\gamma\notin \mathbb{Q}.$ Actually, Lemma~\ref{lem:inv1} cannot even be extended to the $d=1$ case if $a_{1}(n)=\gamma n+\ell,\gamma\notin\mathbb{Q}.$

Indeed, let $d=1$ and $(\mathbb{T},\mu)$ be the 1 dimensional torus endowed with the Lebesgue measure. Let $T\colon\mathbb{T}\to\mathbb{T}$ with $Tx=\gamma^{-1}+x$. 
Then ($\{\cdot\}$ denotes the fractional part)
$$T^{[\gamma n+\ell]}x=\gamma^{-1}[\gamma n+\ell]+x=\gamma^{-1}\ell+x-\gamma^{-1}\{\gamma n\}.$$
Since $(\{\gamma n\})_{n}$ is equidistributed on $\mathbb{T}$, for all $f\in L^{\infty}(\mu)$, we have
$$\lim_{N\to\infty}\frac{1}{N}\sum_{n=0}^{N-1}T^{[\gamma n+\ell]}f(x)=\lim_{N\to\infty}\frac{1}{N}\sum_{n=0}^{N-1}f(\gamma^{-1}\ell+x-\gamma^{-1}\{\gamma n\})=\int_{\gamma^{-1}(\ell-1)+x}^{\gamma^{-1}\ell+x}f(y)\, d\mu(y)$$
and
$$\lim_{N\to\infty}\frac{1}{N}\sum_{n=0}^{N-1}T^{[\gamma n+\ell]+1}f(x)=\int_{\gamma^{-1}+x}^{\gamma^{-1}(\ell+1)+x}f(y)\, d\mu(y).$$
So $\lambda_{x}$ is obviously not $T$-invariant.  Nevertheless, we will show in the next section that there is an explicit expression for the limit $\lim_{N\to\infty}\frac{1}{N}\sum_{n=0}^{N-1}T^{[\gamma n+\ell]}f(x), \gamma\notin\mathbb{Q}$. 
 \end{remark*}




\section{Single pointwise averages}\label{se:4}

In this section, we establish all the pointwise limits of (\ref{E:central}) for $d=1$ for the class $\mathcal{T}\cup\mathcal{L}_\ast,$ where $\mathcal{L}_\ast$ denotes the set of linear functions with non-zero leading coefficient. 

\begin{theorem}\label{thm:BKQW2} 
	Let $(X,\mu,T)$ be a measure preserving system and $f\in L^{\infty}(\mu)$. If $a\in \mathcal{T},$ then for $\mu$-a.e. $x\in X$
	\begin{equation}\nonumber
	\lim_{N\to\infty}\frac{1}{N}\sum_{n=0}^{N-1}T^{[a(n)]}f(x)=\mathbb{E}(f\vert\mathcal{I}(T))(x).
	\end{equation}
\end{theorem}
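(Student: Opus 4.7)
The plan is to transcribe the proof of \cite[Theorem~3.4]{BKQW}---which establishes this very statement in the smaller class $\mathcal{H}^s_\varepsilon$ of Hardy-field functions of polynomial degree $0$ with $x^\varepsilon \prec a(x) \prec x$---to the larger class $\mathcal{T}$. The key observation is that the argument of \cite{BKQW} uses only the existence and explicit values of the three asymptotic ratios $\lim_{x\to\infty} x a^{(k+1)}(x)/a^{(k)}(x)$ for $k=0,1,2$, and these are precisely the properties built into the definition of $\mathcal{T}$ in $\S\ref{se:2}$. Thus no Hardy-field closure is needed; wherever the argument in \cite{BKQW} would invoke differentiation within a Hardy field, one substitutes a direct calculus computation using the explicit asymptotic ratios guaranteed by $\mathcal{T}$, together with L'H\^opital's rule.

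Concretely, I would first reduce to $T$ ergodic via the ergodic decomposition, so that the target limit becomes the constant $\int f\,d\mu$. Since $a\in\mathcal{T}\subseteq\mathcal{F}$, the function $a$ is eventually monotone, $a'(n)\to 0$, and $n|a'(n)|\to\infty$, so the integer sequence $([a(n)])_{n<N}$ visits each integer $k$ in its range with multiplicity
\[
c_{k,N}:=\#\{n<N:[a(n)]=k\}\;\approx\;\frac{1}{|a'(a^{-1}(k))|},
\]
which is slowly varying in $k$. Substituting this change of summation rewrites
\[
\frac{1}{N}\sum_{n=0}^{N-1}f(T^{[a(n)]}x)\;=\;\frac{1}{N}\sum_{k}c_{k,N}\,f(T^k x).
\]
I then apply Abel summation to pass from the weighted sum to the Birkhoff partial sums $S_k(x)=\sum_{j<k}f(T^j x)$. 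By Birkhoff's theorem, $S_k(x)/k\to\int f\,d\mu$ almost everywhere, and the slow variation of the weights $c_{k,N}$ makes the Abel increments telescope cleanly, yielding the asymptotic equality
\[
\frac{1}{N}\sum_{k}c_{k,N}f(T^k x)\;\sim\;\frac{1}{N}\Bigl(\sum_k c_{k,N}\Bigr)\int f\,d\mu\;=\;\int f\,d\mu
\]
almost everywhere, which is the desired conclusion.

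The principal obstacle is quantifying the error terms uniformly in $N$: the approximation $c_{k,N}\approx|a'(a^{-1}(k))|^{-1}$ carries an error controlled by $|(a^{-1})''|$, and the Abel-summation remainder requires $\sum_k|c_{k+1,N}-c_{k,N}|=o(N)$. Both estimates reduce to elementary calculus once one has the existence of the second- and third-order asymptotic ratios defining $\mathcal{T}$, in the spirit of Lemmas~\ref{L:new condition} and~\ref{L:bound}. The only work genuinely beyond \cite{BKQW} is to verify that the calculus bounds there do not invoke any Hardy-field identity stronger than the existence of these three ratios, so that functions such as $x^{1/2}(2+\cos\sqrt{\log x})\in\mathcal{T}\setminus\mathcal{H}^s_\varepsilon$ are handled by the same argument.
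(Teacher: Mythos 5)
Your plan runs on essentially the same mechanism as the paper's argument, so the mathematical content is right: for $a\in\mathcal{T}$ the multiplicity function $\phi(k)=\#\{n:[a(n)]=k\}\approx |(a^{-1})'(k)|$ is almost increasing with $k\phi(k)=O(\Phi(k))$, where $\Phi(k)=\sum_{j\le k}\phi(j)$, and summation by parts transfers ordinary Birkhoff convergence to convergence along $([a(n)])_n$. The only real difference is organizational: rather than re-running the proof of \cite[Theorem~3.4]{BKQW}, the paper quotes the transference result \cite[Theorem~3.5]{BK} (if $\frac{1}{N}\sum_{n<N}x_n\to 0$ for a bounded sequence, then $\frac{1}{N}\sum_{n<N}x_{[a(n)]}\to 0$, under exactly the three conditions just listed), verifies its hypotheses for $\mathcal{T}$ via \cite[Lemma~2.5]{BK}, and applies it to $x_n=f(T^nx)-\mathbb{E}(f\vert\mathcal{I}(T))(x)$; in particular no ergodic decomposition is needed. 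Two spots in your sketch need tightening. First, the condition you state for the Abel remainder, $\sum_k|c_{k+1,N}-c_{k,N}|=o(N)$, is not by itself what closes the estimate: since the Birkhoff partial sums $S_k$ can be of size comparable to $k\le[a(N)]$, what you actually need is that the total variation of the weights is $\approx\phi([a(N)])\approx 1/|a'(N)|$ (almost monotonicity) together with $[a(N)]\cdot\phi([a(N)])\approx a(N)/|a'(N)|=O(N)$; this last bound is precisely where $\liminf_{x\to\infty}xa'(x)/a(x)>0$ from the definition of $\mathcal{T}$ enters, and it is what fails for slow functions such as $\log x\log\log x\in\mathcal{S}\setminus\mathcal{T}$, so it should be made explicit rather than absorbed into ``elementary calculus.'' Second, $\mathcal{T}$ also contains eventually negative functions, for which your counting picture as stated (positive range, $a^{-1}$ increasing) does not apply verbatim; the paper handles this case by noting that $[a(n)]=-[-a(n)]-1$ outside a set of density zero (the exceptional $n$, where $a(n)\in\mathbb{Z}$, number at most $|a(N)|=o(N)$ up to time $N$) and then running the positive case for $-a$ with $T^{-1}$ in place of $T$, using $\mathcal{I}(T^{-1})=\mathcal{I}(T)$.
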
	

 Theorem~\ref{thm:BKQW2} is proved in \cite[Theorem~3.4]{BKQW} for $a\in\mathcal{H}^s_\varepsilon$ but the same result holds for $a\in\mathcal{T}$. For completeness we sketch the idea of the proof.


 By \cite[Theorem~3.5]{BK}, if $a$ satisfies some properties, then  the average of a sequence $(x_n)_n$ along $([a(n)])_n$, i.e., $\frac{1}{N}\sum x_{[a(n)]},$ converges to $0$ as long as the usual average $\frac{1}{N}\sum x_{n}$  converges to $0.$ The proof that the assumptions of \cite[Theorem~3.5]{BK} are satisfied for functions from $\mathcal{T}$ can be found in \cite[Lemma~2.5]{BK}. 
 	 More specifically, \cite[Lemma~2.5]{BK} covers the case where $a>0$ while the case where $a<0$ follows by the fact that $[a(n)]=-[-a(n)]-1$ in a set of density 1 since $[a(n)]=-[-a(n)]$ only happens when $a(n)$ is an integer,  which up to time $N$,  happens at most $a(N)$ times. The claim now follows by the sublinearity of $a.$  
 	 
\begin{remark*}
 Combining this result with Proposition~\ref{P:psub}, we get that $\mathcal{T}$ is a subset of $\mathcal{S}^{\ast}$. 
 In fact,  $\mathcal{S}^{\ast}$ properly contains $\mathcal{T}.$  Indeed, 
recall by $\S$~\ref{se:2} that for a small positive $\alpha$ we have that $$a_2(x)=x^\alpha(4/\alpha+\sin\log x)^3\in \mathcal{S}\setminus \mathcal{T}.$$ 
The function $a_{2}$ belongs to $\mathcal{S}^{\ast}$ as it satisfies all the conditions of \cite[Theorem~3.5]{BK}.

To be more precise,
let $\phi(n)=|\{m\in\mathbb{N}:\;n=[a_2(m)]\}|$ and $\Phi(n)=\sum_{k=0}^n\phi(k)$. Then all the following conditions of \cite[Theorem~3.5]{BK} hold

\medskip

\noindent 	(i) $\lim_{n\to\infty}\phi(n)=\infty$ (obvious);

\medskip
	
\noindent 	(ii) $\phi(n)$ is almost increasing, i.e., $\phi(n)=q_n+p_n,$ where $(q_n)_n$ is increasing and $(p_n)_n$ is bounded (this follows by \cite[Lemma~2.4]{BK} since $a_2^{-1}$ is increasing and $a_2'$ is decreasing);

\medskip
	
\noindent (iii) $n\phi(n)/\Phi(n)\leq c$ for some $c>0$ (this follows by the proof of \cite[Lemma~2.5]{BK} since $\frac{n\phi(n)}{\Phi(n)}$ is arbitrarily close to $\frac{n(a_2^{-1})'(n)}{a_2^{-1}(n)},$ hence it's bounded above by a positive number since, as we already mentioned in $\S$~\ref{se:2},  $\frac{na_2'(n)}{a_2(n)}$ is bounded below by $\alpha-\frac{3\alpha}{4-\alpha}>0$).

  Also, it is worth recalling at this point that Theorem~\ref{D:sUniqueErgodic} establishes the same result as Theorem~\ref{thm:BKQW2} for a larger class of functions in the case of uniquely ergodic systems. 
\end{remark*}


 We can now show that Theorem~\ref{Thm:LimitMeasure} holds for the class $\mathcal{T}$.

\begin{corollary} \label{cor:1}
	Let $d\in \mathbb{N},$ $(X_i,\mu_i,T_i),$  be measure preserving systems, $a_i\in \mathcal{T},$ $1\leq i\leq d$ with $a_d\prec \ldots\prec a_1$ and $\nu$ be any coupling of the spaces $(X_i,\mu_i)$. Then, for $\nu$-a.e. $\vec{x} \in X_1\times \cdots \times X_d$,   we have that   $\lambda_{N,\vec{x}}$ converges to $\mu_{[T_1,\ldots,T_d],\vec{x}}$ as $N\to\infty.$ 
\end{corollary}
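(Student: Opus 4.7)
The plan is to derive the corollary as a direct application of Theorem~\ref{Thm:LimitMeasure}. The hypotheses there require three things: $a_i\in\mathcal{S}^{\ast}$, the growth chain $a_d\prec\cdots\prec a_1$, and the matching derivative chain $a'_d\prec\cdots\prec a'_1$. The first is already in hand, since Proposition~\ref{P:psub} places $\mathcal{T}$ inside $\mathcal{S}$, and the remark after Theorem~\ref{thm:BKQW2} upgrades this to $\mathcal{T}\subseteq\mathcal{S}^{\ast}$ (pointwise convergence of the single average to $\mathbb{E}(f\mid\mathcal{I}(T))$ is precisely the defining property of $\mathcal{S}^{\ast}$). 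The growth comparison $a_d\prec\cdots\prec a_1$ is assumed, so the only thing left is to pass from this chain to the analogous chain on derivatives.

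The derivative comparison reduces to a pairwise statement: if $a,b\in\mathcal{T}$ and $a\prec b$, then $a'\prec b'$. To see this, set $\alpha:=\lim_{x\to\infty}xa'(x)/a(x)$ and $\beta:=\lim_{x\to\infty}xb'(x)/b(x)$, both of which lie in $(0,1]$ by definition of $\mathcal{T}$. Positivity of the limits forces $a'(x)$ and $a(x)$ (respectively $b'(x)$ and $b(x)$) to share the same sign for large $x$, and yields
\[
|a'(x)| = (\alpha+o(1))\,\frac{|a(x)|}{x},\qquad |b'(x)| = (\beta+o(1))\,\frac{|b(x)|}{x}.
\]
Dividing gives
\[
\left|\frac{a'(x)}{b'(x)}\right| = \frac{\alpha+o(1)}{\beta+o(1)}\cdot\left|\frac{a(x)}{b(x)}\right|\longrightarrow 0,
\]
since $\alpha,\beta>0$ and $a\prec b$. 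Hence $a'\prec b'$. Applying this pairwise along the chain $a_d\prec a_{d-1}\prec\cdots\prec a_1$ produces the required $a'_d\prec\cdots\prec a'_1$.

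With all three hypotheses of Theorem~\ref{Thm:LimitMeasure} verified, the conclusion of the corollary follows at once. There is no real obstacle here; the only minor care needed is to treat uniformly both subcases in the definition of $\mathcal{T}$ (i.e.\ $\alpha\in(0,1)$ and $\alpha=1$ with $a'\to 0$ monotonically), and to track absolute values because elements of $\mathcal{T}$ may in principle be eventually negative. The key analytic ingredient is that $xa'/a$ being bounded away from zero pins $|a'|$ to the size $|a|/x$, which is what converts growth-rate inequalities on functions into the same inequalities on their derivatives.
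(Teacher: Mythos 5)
Your proposal is correct and follows essentially the same route as the paper: invoke $\mathcal{T}\subseteq\mathcal{S}^{\ast}$ and deduce $a'_d\prec\cdots\prec a'_1$ from $a_d\prec\cdots\prec a_1$ using the limits $xa_i'(x)/a_i(x)\in(0,1]$, then apply Theorem~\ref{Thm:LimitMeasure}. Your division argument is just the paper's identity $\frac{a_2'(x)}{a_1'(x)}=\frac{xa_2'(x)}{a_2(x)}\cdot\frac{a_1(x)}{xa_1'(x)}\cdot\frac{a_2(x)}{a_1(x)}$ written with $o(1)$ notation, so there is nothing further to add.
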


   This corollary follows (together with the analogous results of Theorems~\ref{C:main}, ~\ref{Thm:Linear}, ~\ref{D:sUniqueErgodic} and ~\ref{D:mainUniqueErgodic}   with the condition $a_i\in \mathcal{S}^{\ast},$ or $\mathcal{S},$ $a_d\prec \ldots\prec a_1$ and $ a'_d\prec \ldots\prec a'_1$ replaced by $a_i\in \mathcal{T},$ or $\mathcal{T}\cup\mathcal{L}_{\ast}$ in Theorem ~\ref{Thm:Linear},  and $a_d\prec \ldots\prec a_1$) by the fact that $\mathcal{T}\subseteq \mathcal{S}^\ast$ and that if $a_1,a_{2}\in \mathcal{T}\cup \mathcal{L}_\ast$ with $a_2\prec a_1$ (so $a_2\in\mathcal{T}$) then we have that $a_2'\prec a_1'$ by the identity $$\frac{a_2'(x)}{a_1'(x)}=\frac{xa_2'(x)}{a_2(x)}\cdot\frac{a_1(x)}{xa_1'(x)}\cdot\frac{a_2(x)}{a_1(x)}.$$
	Of course, for functions of different growth $a_1\prec a_2$ in $S^\ast$ in general, if some of the limits $xa_i'(x)/a_i(x)$ doesn't exist as $x\to\infty$ (as in the case of the function that we saw in  the remark of this section) we don't expect to get the different growth rate of the derivatives, so we have to assume it.

\medskip

 If for every $\alpha\in (0,1]$ we set $\mathcal{T}(\alpha):=\left\{a\in \mathcal{T}:\;\lim_{x\to\infty}\frac{xa'(x)}{a(x)}=\alpha\right\}$ then the following remark gives us a relation between the growth rate of $a_i\in \mathcal{T}(\alpha_i),$ $i=1,2,$ and $\alpha_1,$ $ \alpha_2.$

\begin{remark*}
If $a_i\in \mathcal{T}(\alpha_i),$ $i=1,2,$ with $a_2\prec a_1,$ then $\alpha_2\leq \alpha_1.$ 

Indeed, this follows by \cite[Lemmas 2.1 and 2.6]{BK}. Let $a=a_2/a_1$ and note that $\lim_{x\to\infty}\frac{xa'(x)}{a(x)}=\alpha_2-\alpha_1.$ If $\alpha_2>\alpha_1,$ then by the argument in \cite[Lemma 2.1]{BK}, we have that $|a(x)|\to\infty $ as $x\to\infty,$ a contradiction (note that we only used here the fact that $a$ is bounded).

\noindent Conversely, if $\alpha_2<\alpha_1$ then $a_2\prec a_1.$

 The same argument, for $1/a,$ gives us that $a(x)\to 0$ as $x\to\infty,$ hence $a_2\prec a_1.$

\noindent Note at this point that it can happen $a_2\prec a_1$ while both functions belong to the same $\mathcal{T}(\alpha),$ as  $a_2(x)=x^{1/3}$ and $a_1(x)=x^{1/3}\log x,$ where $a_2\prec a_1$ with $a_1, a_2\in \mathcal{T}(1/3).$
\end{remark*}

\medskip

We now deal with the irrational leading coefficient case in the class $\mathcal{L}_\ast.$

\subsection{{\bf{Irrational leading coefficient}}}\label{se:4.1}
In this subsection, we study the limit along linear iterates of the form $\lim_{N\to\infty}\frac{1}{N}\sum_{n=0}^{N-1}T^{[\gamma n+\ell]}f(x)$ when $\gamma$ is irrational. To do this we have to introduce some additional notions and tools.

\subsubsection{A generalized ergodic theorem} Let $(X,\mu,T)$ be a measure preserving system, $\mathcal{I}_{\gamma,m}(T)$ be the sub-$\sigma$-algebra generated by the eigenspace of $T$ with eigenvalue $-\frac{m}{\gamma}$, $\mathcal{U}_{\gamma,m} \subseteq L^2(\mu)$ be the  closed subalgebra generated by all functions of the form $Tg-\exp\Big(2\pi i \frac{m}{\gamma}\Big)g,$ $\mathcal{I}_{\gamma}(T)=\bigvee_{m\in\mathbb{Z}}\mathcal{I}_{\gamma,m}(T)$ and $\mathcal{U}_{\gamma}(T)=\bigcap_{m\in\mathbb{Z}}\mathcal{U}_{\gamma,m}(T)$. We have the following structure theorem (its proof is routine and we omit it):

\begin{theorem}\label{Thm:str} Let $(X,\mu,T)$ be a measure preserving system and $\gamma\in\mathbb{R}\setminus \mathbb{Q}$. Then 
	$$L^{2}(\mu)=\mathcal{I}_{\gamma,m}(T)\oplus\mathcal{U}_{\gamma,m}(T)$$
	for all $m\in\mathbb{Z}$. In particular, 
	$$L^{2}(\mu)=\mathcal{I}_{\gamma}(T)\oplus\mathcal{U}_{\gamma}(T).$$
\end{theorem}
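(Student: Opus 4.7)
My plan is to reduce each of the two direct sum decompositions, for fixed $m \in \mathbb{Z}$, to the von Neumann mean ergodic theorem applied to a suitable unitary operator, and then to deduce the global (second) decomposition from the per-$m$ one by taking orthogonal complements in $L^{2}(\mu)$.

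First, since $T$ is measure preserving, its Koopman operator on $L^{2}(\mu)$ is unitary. For each $m \in \mathbb{Z}$ the rotated operator $U_{m} := e^{-2\pi i m/\gamma}\,T$ is again unitary, and its kernel $\ker(U_{m} - I)$ is precisely the eigenspace $\{f \in L^{2}(\mu) : Tf = e^{2\pi i m/\gamma} f\}$, which I identify, as a closed subspace of $L^{2}(\mu)$, with $\mathcal{I}_{\gamma,m}(T)$. The identity
\[
U_{m} g - g = e^{-2\pi i m/\gamma}\bigl(Tg - e^{2\pi i m/\gamma} g\bigr)
\]
shows that $\operatorname{Range}(U_{m} - I)$ equals $\{Tg - e^{2\pi i m/\gamma}g : g \in L^{2}(\mu)\}$ up to a unimodular scalar, so its closure is $\mathcal{U}_{\gamma,m}(T)$. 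The mean ergodic theorem applied to $U_{m}$ then delivers the orthogonal decomposition $L^{2}(\mu) = \ker(U_{m} - I) \oplus \overline{\operatorname{Range}(U_{m} - I)} = \mathcal{I}_{\gamma,m}(T) \oplus \mathcal{U}_{\gamma,m}(T)$, which is the first assertion.

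For the global decomposition, I would use the orthogonality $\mathcal{I}_{\gamma,m}(T)^{\perp} = \mathcal{U}_{\gamma,m}(T)$ just obtained, together with the standard Hilbert space identity $\bigl(\overline{\sum_{m} V_{m}}\bigr)^{\perp} = \bigcap_{m} V_{m}^{\perp}$ applied to the family $V_{m} = \mathcal{I}_{\gamma,m}(T)$. This yields
\[
\mathcal{I}_{\gamma}(T)^{\perp} = \Bigl(\bigvee_{m \in \mathbb{Z}} \mathcal{I}_{\gamma,m}(T)\Bigr)^{\perp} = \bigcap_{m \in \mathbb{Z}} \mathcal{U}_{\gamma,m}(T) = \mathcal{U}_{\gamma}(T),
\]
and hence $L^{2}(\mu) = \mathcal{I}_{\gamma}(T) \oplus \mathcal{U}_{\gamma}(T)$. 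The only point that requires care, and is presumably why the authors call the proof routine and omit it, is the translation between the paper's ``sub-$\sigma$-algebra'' and ``closed subalgebra'' terminology and the Hilbert-space-subspace picture used above; once $\mathcal{I}_{\gamma,m}(T)$ is identified with the $L^{2}$-closure of the eigenspace of $T$ for the eigenvalue $e^{2\pi i m/\gamma}$ and $\mathcal{U}_{\gamma,m}(T)$ with the closed linear span of the coboundary-type functions $Tg - e^{2\pi i m/\gamma}g$, both decompositions fall out immediately from the mean ergodic theorem and an orthogonal-complement computation, and I do not foresee any substantial obstacle beyond this notational identification.
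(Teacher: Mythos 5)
Your argument is correct: the paper omits this proof as routine, and the mean ergodic theorem applied to the twisted Koopman operator $U_m=e^{-2\pi i m/\gamma}\,T$ (giving $L^2(\mu)=\ker(U_m-I)\oplus\overline{\operatorname{Range}(U_m-I)}$), followed by the orthogonal-complement identity relating $\bigvee_m$ and $\bigcap_m$, is exactly the standard argument the authors have in mind. You are also right to flag the one genuine subtlety: the statement is to be read in the Hilbert-space sense, with $\mathcal{I}_{\gamma,m}(T)$ the eigenspace itself and $\mathcal{U}_{\gamma,m}(T)$ the closed linear span of the twisted coboundaries (this is how these objects are used in Proposition~\ref{Prop:irequ}), rather than literally as a generated sub-$\sigma$-algebra and subalgebra.
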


We also have the following von Neumann-type mean ergodic theorem:

\begin{proposition}\label{Prop:irequ}
	Let $(X,\mu,T)$ be a measure preserving system, $\gamma\in\mathbb{R}\backslash\mathbb{Q}$, $\ell\in\mathbb{R}$ and $f\in L^{2}(\mu).$ We have the following (each convergence takes place in $L^2(\mu)$):
	\begin{itemize}
		\item If $\mathbb{E}(f\vert\mathcal{I}_{\gamma}(T))=0$, then $$\lim_{N\to\infty}\frac{1}{N}\sum_{n=0}^{N-1}T^{[\gamma n+\ell]}f=0;$$
		\item If $f$ is measurable in $\mathcal{I}_{\gamma,m}(T)$, then $$\lim_{N\to\infty}\frac{1}{N}\sum_{n=0}^{N-1}T^{[\gamma n+\ell]}f=\exp\Big(2\pi i\frac{m\ell}{\gamma}\Big)\cdot\frac{\exp(-2\pi i\frac{m}{\gamma})-1}{-2\pi i \frac{m}{\gamma}}f.$$
	\end{itemize}	
	Consequently, for all $f\in L^{2}(\mu)$, we have 
	$$\lim_{N\to\infty}\frac{1}{N}\sum_{n=0}^{N-1}T^{[\gamma n+\ell]}f=\sum_{m\in\mathbb{Z}}\exp\Big(2\pi i\frac{m\ell}{\gamma}\Big)\cdot\frac{\exp(-2\pi i\frac{m}{\gamma})-1}{-2\pi i \frac{m}{\gamma}}\mathbb{E}(f\vert I_{\gamma,m}(T)).$$
\end{proposition}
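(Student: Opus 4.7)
My plan is to attack both bullet points simultaneously through the spectral theorem for the unitary operator $T$ on $L^{2}(\mu)$. Let $E$ denote the associated projection-valued spectral measure on $\mathbb{T}=[0,1)$ and, for $f\in L^{2}(\mu)$, write $\sigma_{f}(A):=\langle E(A)f,f\rangle$. Since $T^{k}=\int_{\mathbb{T}}e^{2\pi ik\xi}\,dE_{\xi}$ for every integer $k$, the ergodic sum recasts as
$$\frac{1}{N}\sum_{n=0}^{N-1}T^{[\gamma n+\ell]}f=\int_{\mathbb{T}}K_{N}(\xi)\,dE_{\xi}f,\qquad K_{N}(\xi):=\frac{1}{N}\sum_{n=0}^{N-1}e^{2\pi i[\gamma n+\ell]\xi},$$
and the squared $L^{2}(\mu)$-distance to $\int_{\mathbb{T}}\phi\,dE_{\xi}f$ equals $\int_{\mathbb{T}}|K_{N}-\phi|^{2}\,d\sigma_{f}$ for any bounded $\phi$. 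Because $|K_{N}|\le 1$, bounded convergence reduces the whole proposition to identifying the pointwise limit $\phi(\xi):=\lim_{N}K_{N}(\xi)$.

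To compute $\phi$, I would use $[\gamma n+\ell]=(\gamma n+\ell)-\{\gamma n+\ell\}$ to split
$$e^{2\pi i[\gamma n+\ell]\xi}=e^{2\pi i\ell\xi}\,e^{2\pi in\gamma\xi}\,e^{-2\pi i\{\gamma n+\ell\}\xi},$$
expand the last factor as a Fourier series on $[0,1)$, namely $e^{-2\pi iu\xi}=\sum_{k\in\mathbb{Z}}c_{k}(\xi)e^{2\pi iku}$ with $c_{k}(\xi)=\frac{e^{-2\pi i(\xi+k)}-1}{-2\pi i(\xi+k)}$, and exploit $e^{2\pi ik\{\gamma n+\ell\}}=e^{2\pi ik(\gamma n+\ell)}$ for $k\in\mathbb{Z}$ (since $e^{2\pi ik[\gamma n+\ell]}=1$). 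A formal rearrangement yields
$$K_{N}(\xi)=e^{2\pi i\ell\xi}\sum_{k\in\mathbb{Z}}c_{k}(\xi)\,e^{2\pi ik\ell}\cdot\frac{1}{N}\sum_{n=0}^{N-1}e^{2\pi in\gamma(k+\xi)}.$$
Because $\gamma\notin\mathbb{Q}$, the inner geometric average tends to $1$ precisely when $\gamma(k+\xi)\in\mathbb{Z}$ and to $0$ otherwise, producing
$$\phi(\xi)=e^{2\pi im\ell/\gamma}\cdot\frac{e^{-2\pi im/\gamma}-1}{-2\pi im/\gamma}\quad\text{at the unique }\xi\equiv m/\gamma\pmod{1},\ m\in\mathbb{Z},$$
and $\phi(\xi)=0$ elsewhere.

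The main technical obstacle is rigorously justifying the interchange of the $k$-sum with the $N$-limit, since the Fourier expansion of $u\mapsto e^{-2\pi iu\xi}$ converges only in $L^{2}$ on $[0,1)$ and not uniformly. The plan is to bypass this by a Fourier truncation: for each $K$ the finite partial-sum part is handled term-by-term as above, while the tail $R_{K}(u):=e^{-2\pi iu\xi}-\sum_{|k|\le K}c_{k}(\xi)e^{2\pi iku}$ contributes at most $\frac{1}{N}\sum_{n}|R_{K}(\{\gamma n+\ell\})|$; Weyl equidistribution of $(\{\gamma n+\ell\})_{n}$ in $[0,1)$ (valid since $\gamma$ is irrational and $R_{K}$ is Riemann integrable) forces this quantity to converge to $\int_{0}^{1}|R_{K}(u)|\,du\le\|R_{K}\|_{2}$, which vanishes as $K\to\infty$.

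It then remains to translate the spectral limit back to the stated form. Since $\phi$ is supported on the countable set $\{m/\gamma\bmod 1:m\in\mathbb{Z}\}$, one has $\int\phi\,dE_{\xi}f=\sum_{m\in\mathbb{Z}}\phi(m/\gamma)\,E(\{m/\gamma\bmod 1\})f$, and each atomic projection $E(\{m/\gamma\bmod 1\})$ is the orthogonal projection onto the eigenspace of $T$ corresponding to the relevant eigenvalue, which under the convention fixed by Theorem~\ref{Thm:str} coincides with $\mathbb{E}(\cdot\vert\mathcal{I}_{\gamma,m}(T))$. This produces the displayed formula, and the two bullet points follow as immediate specializations: if $f$ is $\mathcal{I}_{\gamma,m}(T)$-measurable only the $m$-th term survives (the second bullet), while if $\mathbb{E}(f\vert\mathcal{I}_{\gamma}(T))=0$ then $\sigma_{f}$ carries no atom at any $m/\gamma\bmod 1$ and the entire sum vanishes (the first).
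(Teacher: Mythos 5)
Your argument is correct, but it takes a genuinely different route from the paper. You diagonalize the Koopman operator via the spectral theorem and reduce the whole proposition to the scalar computation of $\lim_{N}K_{N}(\xi)$, where $K_{N}(\xi)=\frac{1}{N}\sum_{n=0}^{N-1}e^{2\pi i[\gamma n+\ell]\xi}$, which you evaluate by Fourier-expanding $u\mapsto e^{-2\pi iu\xi}$ on $[0,1)$, using the geometric-sum dichotomy for $\frac{1}{N}\sum_{n}e^{2\pi in\gamma(k+\xi)}$ and Weyl equidistribution of $(\{\gamma n+\ell\})_{n}$; the truncation scheme you sketch for interchanging the $k$-sum with the $N$-limit is sound, since it only needs $\Vert R_{K}\Vert_{L^{2}[0,1]}\to 0$ and Riemann integrability of $|R_{K}|$, and irrationality of $\gamma$ guarantees that at most one $k$ resonates at each $\xi$. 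The paper argues instead inside $L^{2}(\mu)$: it splits $f$ by the structure theorem (Theorem~\ref{Thm:str}); for $f\in\mathcal{U}_{\gamma}(T)$ and $\gamma>1$ it rewrites the average along $[\gamma n+\ell]$ as an indicator-weighted average along $n$, Fourier-expands the indicator $\textbf{1}_{(1-\frac{1}{\gamma},1)}$, and kills each term by an approximate-coboundary telescoping, reducing $0<\gamma<1$ to this case along arithmetic progressions; for eigenfunctions it computes the limit by equidistribution of $\{n\gamma+\ell\}$, which is exactly the second factor in your $\phi$. Your route buys uniformity (no case split on the size or sign of $\gamma$), treats all $f\in L^{2}(\mu)$ at once, and produces the closed formula with both bullets as immediate specializations; the paper's route avoids spectral theory and stays with elementary coboundary estimates that mirror its structure theorem. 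Two points to make explicit in a full write-up: dominated convergence is applied with respect to the finite measure $\sigma_{f}$ using $|K_{N}|\le 1$, and your identification of the atomic projection $E(\{m/\gamma \bmod 1\})$ with $\mathbb{E}(\cdot\vert\mathcal{I}_{\gamma,m}(T))$, i.e.\ with the orthogonal projection onto the eigenspace for the eigenvalue $e^{2\pi i m/\gamma}$, rests on the same convention the paper itself uses implicitly in its proof of the second bullet.
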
	

\begin{proof}
	Assume that $\mathbb{E}(f\vert\mathcal{I}_{\gamma}(T))=0$.  By Theorem \ref{Thm:str}, $f\in \mathcal{U}_{\gamma}(T)$.
	Let $\varepsilon>0$. By definition,  for all $j\in\mathbb{Z}$, there exist $g_{j}, \varepsilon_j\in L^{2}(\mu)$  such that $f=Tg_{j}-\exp\Big(2\pi i\frac{j}{\gamma}\Big)g_{j}+\varepsilon_j$ and $\|\varepsilon_j\|_2\leq \varepsilon$.
	We may assume that $\gamma>0$ since the proof of the other case is identical.
	We first assume that $\gamma>1$. Note that $m=[n\gamma+\ell]$ for some $n\in\mathbb{Z}$ if and only if $\Big\{\frac{m-\ell}{\gamma}\Big\}\in \Big(1-\frac{1}{\gamma},1\Big)$. Additionally for each $m\in\mathbb{Z}$, there is at most one $n\in\mathbb{Z}$ such that $m=[n\gamma+\ell]$. So we have
	$$\lim_{N\to\infty}\frac{1}{N}\sum_{n=0}^{N-1}T^{[\gamma n+\ell]}f=\lim_{N\to\infty}\frac{\gamma}{N}\sum_{n=0}^{N-1}\textbf{1}_{(1-\frac{1}{\gamma},1)}\Big(\Big\{\frac{n-\ell}{\gamma}\Big\}\Big)T^{n}f.$$
	Let $$\textbf{1}_{(1-\frac{1}{\gamma},1)}\Big(\Big\{x-\frac{\ell}{\gamma}\Big\}\Big)=\sum_{j\in\mathbb{Z}}a_{j}\exp(-2\pi i jx),\;\; a_{j}\in\mathbb{R}$$
	be the Fourier expansion of the function $\textbf{1}_{(1-\frac{1}{\gamma},1)}\Big(\Big\{\cdot-\frac{\ell}{\gamma}\Big\}\Big).$
	Then 
		\begin{equation}\nonumber
		\begin{split}
		&\lim_{N\to\infty}\frac{1}{N}\sum_{n=0}^{N-1}T^{[\gamma n+\ell]}f
		=\lim_{N\to\infty}\frac{\gamma}{N}\sum_{n=0}^{N-1}\Big(\sum_{j\in\mathbb{Z}}a_{j}\exp\Big(-2\pi i \frac{jn}{\gamma}\Big)T^{n}f\Big)
		\\&=\sum_{j\in\mathbb{Z}}a_{j}\left(\lim_{N\to\infty}\frac{\gamma}{N}\Big(\sum_{n=1}^{N}\exp\Big(-2\pi i \frac{j(n-1)}{\gamma}\Big)T^{n}g_{j}-\sum_{n=0}^{N-1}\exp\Big(-2\pi i \frac{j(n-1)}{\gamma}\Big)T^{n}g_{j}\Big)\right)\\
		&\quad + \sum_{j\in\mathbb{Z}}a_{j}\Bigl(\lim_{N\to\infty}\frac{\gamma}{N}\Big(\sum_{n=1}^{N}\exp\Big(-2\pi i \frac{j(n-1)}{\gamma}\Big)T^{n}\varepsilon_j \Big)\Bigr).  
		\end{split}
		\end{equation} 
The first series is equal to 0, while the second one has $L^2$-norm smaller than $\varepsilon$. Since $\varepsilon$ is arbitrary	we conclude that $\lim_{N\to\infty}\frac{1}{N}\sum_{n=0}^{N-1}T^{[\gamma n+\ell]}f=0$. 	
		
	If $0<\gamma<1$, pick $W\in\mathbb{N}$ such that $\gamma W>1$. Then by the previous computation, 	
	$$\lim_{N\to\infty}\frac{1}{N}\sum_{n=0}^{N-1}T^{[\gamma n+\ell]}f
	=\frac{1}{W}\sum_{k=0}^{W-1}\lim_{N\to\infty}\frac{1}{N}\sum_{n=0}^{N-1}T^{[(\gamma W) n+(\gamma k+\ell)]}f=0.$$ 
	This finishes the proof of the first part. Suppose now that $f$ is measurable in $\mathcal{I}_{\gamma,m}(T)$. Then $Tf(x)=\exp\Big(2\pi i \frac{m}{\gamma}\Big)f(x)$ for $\mu$-a.e. $x\in X$. For such $x\in X$, we have
	\begin{equation}\nonumber
	   \begin{split}
	     \frac{1}{N}\sum_{n=0}^{N-1}T^{[\gamma n+\ell]}f(x)
	     & =\frac{1}{N}\sum_{n=0}^{N-1}\exp\Big(2\pi i \frac{m[n\gamma +\ell]}{\gamma}\Big)f(x)
	     \\&=\exp\Big(2\pi i \frac{m\ell}{\gamma}\Big)f(x)\cdot\frac{1}{N}\sum_{n=0}^{N-1}\exp\Big(-2\pi i\frac{m\{n\gamma +\ell\}}{\gamma}\Big).
	   \end{split}
	\end{equation} 
	Since the sequence $(\{n\gamma+\ell\})_{n}$ is equidistributed in $\mathbb{T},$ we have that 
	$$\lim_{N\to\infty}\frac{1}{N}\sum_{n=0}^{N-1}\exp\Big(-2\pi i\frac{m\{n\gamma +\ell\}}{\gamma}\Big)=\int_{\mathbb{T}}\exp\Big(-2\pi i\frac{mx}{\gamma}\Big)\, dx=\frac{\exp(-2\pi i\frac{m}{\gamma})-1}{-2\pi i \frac{m}{\gamma}}.$$
	This completes the proof.
\end{proof}

\subsubsection{A special extension} Let $(x,\mu,T)$ be a measure preserving system.
We build an extension system of $X$ which will be used to prove Theorem~\ref{Thm:Linear}.
Let $([0,1],\mathcal{D},m)$ be the $[0,1]$ interval with the Lebesgue measure $m$. Let $R$ be the equivalence relation on $[0,1]\times X$ generated by $((1,x),(0,Tx)),$ $x\in X,$ $Y=([0,1]\times X)/R,$ $ \mathcal{Y}=(\mathcal{D}\times\mathcal{B})/R,$ $ \nu=m\times\mu/R,$ and $ \tilde{T}=id\times T.$
Then we have a factor map
$\pi\colon(Y,\mathcal{Y},\nu,\tilde{T})\to(X,\mathcal{B},\mu,T)$, where $\pi$ is the projection to the second coordinate. 
Let $S\colon Y\to Y$ with $S(t,x)=S(t+\gamma,x).$ Then (using the relations $[\gamma+\{\gamma'\}]+[\gamma']=[\gamma+\gamma']$ and $\{\gamma+\{\gamma'\}\}=\{\gamma+\gamma'\}$) we have $$S^{n}\Big(\{\ell\},T^{[\ell]}x\Big)=(n\gamma+\ell,x)=\Big(\{n\gamma+\ell\},T^{[n\gamma+\ell]}x\Big)$$ for all $n\in\mathbb{Z}$ and $x\in X.$ For a function $f$ on $X$ we define its extension, $\tilde{f}$ on $Y,$ by $\tilde{f}(t,x)=f(x)$ for all $t\in[0,1), x\in X$. Then $$T^{[\gamma n+\ell]}f(x)=\tilde{f}\Big(\{n\gamma+\ell\},T^{[n\gamma+\ell]}x\Big)=S^{n}\tilde{f}\Big(\{\ell\},T^{[\ell]}x\Big).$$

\begin{corollary}\label{Cor:irequ}
	Let the quantifiers be as above. Then for $\mu$-a.e. $x\in X$ we have $$\mathbb{E}\Big(\tilde{f}\vert\mathcal{I}(S)\Big)\Big(\{\ell\},T^{[\ell]}x\Big)=\sum_{m\in\mathbb{Z}}\exp\Big(2\pi i\frac{m\ell}{\gamma}\Big)\cdot\frac{\exp(-2\pi i\frac{m}{\gamma})-1}{-2\pi i \frac{m}{\gamma}}\mathbb{E}(f\vert I_{\gamma,m}(T))(x).$$
\end{corollary}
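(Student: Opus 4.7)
The plan is to construct an explicit $S$-invariant function $G$ on $Y$, show that in $L^{2}(\nu)$ it coincides with $\mathbb{E}(\tilde{f}\vert\mathcal{I}(S))$, and then read off the value at $(\{\ell\},T^{[\ell]}x)$ by direct computation. This reduces the corollary to a formula-matching exercise once the ergodic and von Neumann viewpoints on the same averaging sequence are reconciled.

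Motivated by the formula in Proposition~\ref{Prop:irequ}, I would set
$$G(t,x):=\sum_{m\in\mathbb{Z}} e^{2\pi i m t/\gamma}\, c_{m}\, \mathbb{E}(f\vert \mathcal{I}_{\gamma,m}(T))(x), \qquad c_{m}:=\frac{e^{-2\pi i m/\gamma}-1}{-2\pi i m/\gamma},$$
and first check that this descends to a well-defined function on $Y$: since any $g\in\mathcal{I}_{\gamma,m}(T)$ satisfies $g\circ T=e^{2\pi i m/\gamma}g$, one has $\mathbb{E}(f\vert\mathcal{I}_{\gamma,m}(T))(Tx)=e^{2\pi i m/\gamma}\mathbb{E}(f\vert\mathcal{I}_{\gamma,m}(T))(x)$, which absorbs exactly the boundary phase required for $G(1,x)=G(0,Tx)$. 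The same eigenvalue identity together with $e^{2\pi i m}=1$ shows $G\circ S=G$, hence $G$ is $\mathcal{I}(S)$-measurable, and orthogonality of the projections $\mathbb{E}(\cdot\vert\mathcal{I}_{\gamma,m}(T))$ in $L^{2}(\mu)$ combined with $|c_{m}|\leq 1$ gives $\|G(t,\cdot)\|_{L^{2}(\mu)}\leq \|f\|_{L^{2}(\mu)}$ uniformly in $t$, so $G\in L^{2}(\nu)$.

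To identify $G$ with $\mathbb{E}(\tilde{f}\vert\mathcal{I}(S))$ I would combine the two natural limits of the ergodic-type averages on $(Y,S,\nu)$. From the identity $S^{n}\tilde{f}(t,x)=T^{[t+n\gamma]}f(x)$ (which follows from $S^{n}(t,x)=(\{t+n\gamma\},T^{[t+n\gamma]}x)$ in the fundamental domain) and Proposition~\ref{Prop:irequ} applied with $\ell$ replaced by $t$, the averages $\frac{1}{N}\sum_{n=0}^{N-1}S^{n}\tilde{f}(t,\cdot)$ converge to $G(t,\cdot)$ in $L^{2}(\mu)$ for every $t\in[0,1)$. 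Since both the averages and $G(t,\cdot)$ are bounded in $L^{2}(\mu)$ uniformly in $t$ by $\|f\|_{L^{2}(\mu)}$, dominated convergence in $t$ upgrades this to $\frac{1}{N}\sum_{n=0}^{N-1}S^{n}\tilde{f}\to G$ in $L^{2}(\nu)$. On the other hand, von Neumann's mean ergodic theorem for $(Y,\nu,S)$ yields convergence of the same sequence to $\mathbb{E}(\tilde{f}\vert\mathcal{I}(S))$ in $L^{2}(\nu)$, so by uniqueness of the limit $G=\mathbb{E}(\tilde{f}\vert\mathcal{I}(S))$ $\nu$-almost everywhere.

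Finally I would evaluate $G$ at $(\{\ell\},T^{[\ell]}x)$: the eigenfunction relation gives $\mathbb{E}(f\vert\mathcal{I}_{\gamma,m}(T))(T^{[\ell]}x)=e^{2\pi i m[\ell]/\gamma}\mathbb{E}(f\vert\mathcal{I}_{\gamma,m}(T))(x)$, and the first-coordinate phase contributes $e^{2\pi i m\{\ell\}/\gamma}$; their product collapses to $e^{2\pi i m\ell/\gamma}$, producing exactly the expression in the statement. The delicate point is that the identification $G=\mathbb{E}(\tilde{f}\vert\mathcal{I}(S))$ was obtained only modulo $\nu$-null sets, whereas the conclusion concerns values on the $\nu$-null slice $\{\{\ell\}\}\times X$; the corollary must therefore be read with $G$ as the canonical, everywhere-defined representative of $\mathbb{E}(\tilde{f}\vert\mathcal{I}(S))$, which is the only natural interpretation consistent with the construction above.
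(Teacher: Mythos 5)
Your computations are correct as far as they go: $G$ is well defined on $Y$ (the eigenfunction relation absorbs the boundary identification), $S$-invariant, lies in $L^{2}(\nu)$, and the identification $G=\mathbb{E}(\tilde{f}\vert\mathcal{I}(S))$ $\nu$-a.e.\ via Proposition~\ref{Prop:irequ} (with $\ell$ replaced by $t$), dominated convergence in $t$, and the mean ergodic theorem on $(Y,\nu,S)$ is sound. The genuine gap is exactly at the point you flag and then dismiss. The identity you prove holds only modulo $\nu$-null sets, whereas the corollary concerns the values of $\mathbb{E}(\tilde{f}\vert\mathcal{I}(S))$ on the $\nu$-null slice $\{\{\ell\}\}\times X$; these values are not determined by an a.e.\ identity, and your resolution --- decreeing that $G$ is ``the canonical representative'' --- turns the statement into a near-tautology: $G$ was \emph{defined} by the right-hand side, so evaluating it at $(\{\ell\},T^{[\ell]}x)$ returns the formula by construction and carries no information about the ergodic averages at those points. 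But that information is the whole point: in the proof of Theorem~\ref{Thm:Linear} (irrational leading coefficient), $\mathbb{E}(\tilde{f}\vert\mathcal{I}(S))(\{\ell\},T^{[\ell]}x)$ stands for the $\mu$-a.e.\ limit of $\frac{1}{N}\sum_{n=0}^{N-1}S^{n}\tilde{f}(\{\ell\},T^{[\ell]}x)=\frac{1}{N}\sum_{n=0}^{N-1}T^{[\gamma n+\ell]}f(x)$, so the corollary must deliver (i) the $\mu$-a.e.\ existence of this pointwise limit and (ii) its identification with the displayed series. Your argument delivers neither (i) nor, consequently, (ii) at the slice.

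The paper's proof supplies precisely the missing step, by a different route: Birkhoff's pointwise ergodic theorem on $(Y,\nu,S)$ gives $\nu$-a.e.\ convergence of $\frac{1}{N}\sum_{n=0}^{N-1}S^{n}\tilde{f}$ to $\mathbb{E}(\tilde{f}\vert\mathcal{I}(S))$; using that $\tilde{f}$ does not depend on the first coordinate and that $\mu$ is $T$-invariant, this convergence is transferred to the points $(\{\ell\},T^{[\ell]}x)$ for $\mu$-a.e.\ $x$; hence $\lim_{N\to\infty}\frac{1}{N}\sum_{n=0}^{N-1}T^{[\gamma n+\ell]}f(x)$ exists $\mu$-a.e., and since a $\mu$-a.e.\ limit of a sequence converging in $L^{2}(\mu)$ must agree a.e.\ with the $L^{2}$-limit, Proposition~\ref{Prop:irequ} identifies it with the displayed series. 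If you wish to keep your explicit $G$, you still need this Birkhoff step together with an argument transferring a.e.\ convergence on $Y$ to the slice $t=\{\ell\}$ (e.g.\ comparing $[t+n\gamma]$ with $[\{\ell\}+n\gamma]$ for $t$ near $\{\ell\}$, the set of disagreement having small density, or a Fubini argument as in the paper); only then do you know that the averages at $(\{\ell\},T^{[\ell]}x)$ actually converge to $G(\{\ell\},T^{[\ell]}x)$, which is the statement the rest of the paper uses.
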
	

\begin{proof} 
	By Birkhoff's ergodic theorem, the limit $\lim_{N\to\infty}\frac{1}{N}\sum_{n=0}^{N-1}S^{n}\tilde{f}(y,z)$ exists and equals to $\mathbb{E}\Big(\tilde{f}\vert\mathcal{I}(S)\Big)(y,z)$ for $\nu$-a.e. $(y,z)\in Y$. Since $\tilde{f}(y,z)=\tilde{f}(y',z)$ for all $y,y'\in[0,1],$ $ z\in X$, and $\mu$ is $T$-invariant, it is easy to conclude that $\lim_{N\to\infty}\frac{1}{N}\sum_{n=0}^{N-1}S^{n}\tilde{f}\Big(\{\ell\},T^{[\ell]}x\Big)$ exists and equals to $\mathbb{E}\Big(\tilde{f}\vert\mathcal{I}(S)\Big)\Big(\{\ell\},T^{[\ell]}x\Big)$ for $\mu$-a.e. $x\in X$. This implies that the limit
	$$\lim_{N\to\infty}\frac{1}{N}\sum_{n=0}^{N-1}T^{[\gamma n+\ell]}f(x)$$ exists for $\mu$-a.e. $x\in X$, hence its pointwise limit equals to its $L^{2}(\mu)$ limit. So	$$\mathbb{E}\Big(\tilde{f}\vert\mathcal{I}(S)\Big)\Big(\{\ell\},T^{[\ell]}x\Big)=\lim_{N\to\infty}\frac{1}{N}\sum_{n=0}^{N-1}S^{n}\tilde{f}\Big(\{\ell\},T^{[\ell]}x\Big)=\lim_{N\to\infty}\frac{1}{N}\sum_{n=0}^{N-1}T^{[\gamma n+\ell]}f(x)$$
	for $\mu$-a.e. $x\in X.$ The result now follows by Proposition \ref{Prop:irequ}.
\end{proof}

\section{Proof of main results} \label{Sec:MainResults}

In this last section we give the proof of the statements in $\S$~\ref{se:1}.  To lighten the notation, we omit writing the spaces along which we integrate, since they are easily deduced by the measures that we use.

\begin{proof}[Proof of Theorem~\ref{Thm:LimitMeasure}]



	The case $d=1$ is true by the definition of $\mathcal{S}^\ast.$ 	 Now suppose the conclusion holds for $d-1$. Let  $\nu$ be a coupling of the systems $(X_i,\mu_i)$ and let $\pi_{1}\colon X_1\times\cdots\times X_{d}\to X_1$ be the projection to the first coordinate and $\pi_{2}\colon X_1\times\cdots\times X_d\to X_2\times\cdots\times X_d$ be the projection to the rest coordinates. Write $\vec{x}=(x_1,x_{\ast})$, where $x_{\ast}=(x_2,\ldots,x_d)$. By induction hypothesis,  $(\pi_{1})_{*}\lambda_{\vec{x}}=\mu_{[T_{1}],x_1}$ $\nu$-a.e.  and $(\pi_{2})_{*}\lambda_{\vec{x}}=\mu_{[T_{2},\dots,T_{d}],x_{\ast}}$ $\nu$-a.e. 
	
	
	For every $1\leq i\leq d$, fix a countable dense set of continuous functions $\mathcal{C}_i=\{g_{i,k}: k\in \mathbb{N} \}\subseteq C(X_i)$. Let $X_1'\subseteq X_1$ be a full $\mu_1$-measure set such that   $\left(\frac{1}{N}\sum_{n=0}^{N-1}T_{1}^{n}g_{1,k}(x_1)\right)_N$ converges to $\mathbb{E}(g_{1,k}\vert \mathcal{I}({T_1}))(x_1)=\int g_{1,k} \; d\mu_{[T_1],x_1}$ as $N\to\infty$ for every $k\in \mathbb{N}$ and $x_1\in X'_1$. Since $\mu_1=\int \mu_{[T_1],x_1}\; d\mu_1(x_1)$, the same is true for $\mu_{[T_1],x_1}$-a.e. $y \in X_1$ for $\mu_1$-a.e. $x_1\in X_1$. 
	Let $f_i \in \mathcal{C}_i$, $1\leq i\leq d$ and $\vec{x}\in \pi_1^{-1}(X_1')$. By Lemma \ref{lem:inv}  and since $(\pi_{1})_{*}\lambda_{\vec{x}}=\mu_{[T_{1}],x_1}$ we have
	\begin{eqnarray*}
		\int f_{1}\otimes f_{2}\otimes\dots\otimes f_{d}\; d\lambda_{\vec{x}}
		& = & \lim_{N\to\infty}\frac{1}{N}\sum_{n=0}^{N-1}\int T_{1}^{n}f_{1}\otimes f_{2}\otimes\dots\otimes f_{d}\; d\lambda_{\vec{x}}
		\\& = & \int\Bigl(\lim_{N\to\infty}\frac{1}{N}\sum_{n=0}^{N-1}T_{1}^{n}f_{1}\Bigr)\otimes f_{2}\otimes\dots\otimes f_{d}\; d\lambda_{\vec{x}}
		\\& = & \int \mathbb{E}(f_{1}\vert\mathcal{I}(T_{1}))(x_1)\otimes f_{2}\otimes\dots\otimes f_{d}\; d\lambda_{\vec{x}}
		\\ & = & \int\left (\int f_1d\mu_{[T_1],x_1} \right) \otimes f_{2}\otimes\dots\otimes f_{d}\; d\lambda_{\vec{x}}.
	\end{eqnarray*}
	Remark that  the function $\left (\int f_1\;d\mu_{[T_1],x_1} \right)$ is constant $\mu_{[T_1],x_1}$-a.e. and thus continuous $\mu_{[T_1],x_1}$-a.e. Using again the definition of $\lambda_{\vec{x}}$ and the induction hypothesis we get that 
	\begin{eqnarray*}
	\int f_{1}\otimes f_{2}\otimes\dots\otimes f_{d}\; d\lambda_{\vec{x}} & = & \left (\int f_1\;d\mu_{[T_1],x} \right)\left (\int f_2\otimes\cdots\otimes f_d\;d\mu_{[T_2,\ldots,T_d],x_{\ast}} \right)
		\\  & = &  \mathbb{E}(f_{1}\vert\mathcal{I}(T_{1}))(x_1)\mathbb{E}(f_{2}\vert\mathcal{I}(T_{2}))(x_2)\cdot\ldots\cdot\mathbb{E}(f_{d}\vert\mathcal{I}(T_{d}))(x_d).
	\end{eqnarray*}
	By the density of linear combinations of functions  $f_1\otimes \cdots \otimes f_d$, $f_i\in \mathcal{C}_i$ in $C(X_1\times\cdots \times X_d)$, we get that $\lambda_{\vec{x}}=\mu_{[T_1,\ldots,T_d],\vec{x}}$.   
\end{proof}

We are now ready to prove Theorem~\ref{C:main} via Theorem~\ref{Thm:LimitMeasure}. 

\begin{proof}[Proof of Theorem~\ref{C:main}] 
	We keep the notation as above, and let $\mathcal{C}_i$ be a countable family of continuous functions which is dense in $L^{2}(\mu_i)$ for $1\leq i \leq d$. 
	For $N\in\mathbb{N}$ denote $$A_N(x_1,\ldots,x_d) :=\frac{1}{N}\sum_{n=0}^{N-1} f_1\Big(T_{1}^{[a_1(n)]}x_1\Big)\cdot\ldots\cdot f_d\Big( T_{d}^{[a_d(n)]}x_d\Big).$$
	For $k\in\mathbb{N}$, pick $\widehat{f}_{i,k}\in\mathcal{C}_i$  such that $\| f_i - \widehat{f}_{i,k}\|_{2}\leq \frac{1}{k}$ for $1\leq i \leq d$  and denote $$\widehat{A}_{N,k}(x_1,\ldots,x_d):=\frac{1}{N}\sum_{n=0}^{N-1} \widehat{f}_{1,k}\Big(T_{1}^{[a_1(n)]}x_1\Big)\cdot\ldots\cdot \widehat{f}_{d,k}\Big(T_{d}^{[a_d(n)]}x_d\Big).$$ By the definition of $\mathcal{S}^{*}$ and the telescoping inequality,  we have that 
	\[\limsup_{N\to \infty } |A_N(x_1,\ldots,x_d)-\widehat{A}_{N,k}(x_1,\ldots,x_d)| \leq \sum_{i=1}^d  \mathbb{E}(|f_i-\widehat{f}_{i,k}|\vert \mathcal{I}(T_i))(x_i)  \]	
	for $\nu$-a.e. $\vec{x}=(x_1,\ldots,x_d)\in X_1\times\cdots\times X_d$ and for all $k\in \mathbb{N}$. 
	So for every $k\in \mathbb{N}$, 
	\[ \limsup_{N\to \infty } \left |A_N(x_1,\ldots,x_d)- 	\int f_1\otimes \cdots \otimes f_d\; d\mu_{[T_1,\ldots,T_d],\vec{x}} \right |\]
	is bounded by the sum of the terms 
	\begin{equation} \label{eq:AN}
	\limsup_{N\to \infty }   \left |A_N(x_1,\ldots,x_d)-\widehat{A}_{N,k}(x_1,\ldots,x_d) \right |,
	\end{equation}  
\begin{equation}  \label{eq:limitcontinuous}
	\limsup_{N\to \infty } \left  |\widehat{A}_{N,k}(x_1,\ldots,x_d)-	\int \widehat{f}_{1,k}\otimes \cdots \otimes\widehat{f}_{d,k}\; d\mu_{[T_1,\ldots,T_d],\vec{x}} \right |;\;\;\;\textit{and}
	\end{equation} 
	\begin{equation}\label{eq:DifMu}
	\left |\int \widehat{f}_{1,k}\otimes \cdots \otimes\widehat{f}_{d,k}\; d\mu_{[T_1,\ldots,T_d],\vec{x}}-	\int f_1\otimes \cdots \otimes f_d\; d\mu_{[T_1,\ldots,T_d],\vec{x}} \right  |.
	\end{equation} 
	By Theorem \ref{Thm:LimitMeasure}, the Term~\eqref{eq:limitcontinuous} is equal to 0. Again by telescoping, the sum of the Terms~\eqref{eq:AN} and ~\eqref{eq:DifMu} is bounded by $2\sum_{i=1}^d  \mathbb{E}(|f_i-\widehat{f}_{i,k}|\vert \mathcal{I}(T_i))(x_i)$, for $\nu$-a.e. $(x_1,\ldots,x_d)\in X_1\times\cdots\times X_d$.  
	If $A_{m}$ denotes the set of points $ \vec{x}=(x_1,\ldots,x_d)$ such that \[\limsup_{N\to \infty } \left |A_N(x_1,\ldots,x_d)- \int f_1\otimes \cdots \otimes f_d\; d\mu_{[T_1,\ldots,T_d],\vec{x}} \right | \geq \frac{1}{m},\]
	then Markov's inequality implies that for every $m\in \mathbb{N}$ the measure of $A_{m}$ is smaller than $\frac{2d m}{k}$. Since $k$ is arbitrary, we have that $\nu(A_m)=0$ and $\nu(\cap_{m\in\mathbb{N}} A_{m}^c)=1$. It is immediate to check that $A_N(x_1,\ldots,x_d)$ converges for every $(x_1,\ldots,x_d)\in  \cap_{m\in\mathbb{N}} A_{m}^c$. 
\end{proof}

\begin{proof}[Proof of Theorem~\ref{D:sUniqueErgodic}]
Let $x\in X$ and $\lambda_{N,x}=\frac{1}{N}\sum_{n=0}^{N-1}T^{[a(n)]}\delta_x$.  	By Lemma~\ref{lem:inv}, we have that any weak limit of $\lambda_{N,x}$ is $T$-invariant and hence equal to $\mu$ by unique ergodicity. Therefore, $\lambda_{N,x}$ converges to $\mu$ as $N\to\infty$ and the conclusion follows.
\end{proof}

\begin{proof}[Proof of Theorem~\ref{D:mainUniqueErgodic}]
It is the same proof as of Theorem~\ref*{Thm:LimitMeasure}, combined with the fact that the single average converges by Theorem~\ref{D:sUniqueErgodic}.
\end{proof}


\begin{proof}[Proof of Theorem~\ref{Thm:Linear}]
	\textbf{(i) Case where $a_1(n)=pn,$ $ p\in\mathbb{Z}\setminus\{0\}$.} The proof is very similar to the one of Theorem \ref{Thm:LimitMeasure}; we sketch it for completeness.  We will give an expression for any weak limit of the average of the Dirac measure $\delta_{\vec{x}}$, $\vec{x}=(x_1,\ldots,x_d)$ in a dense family of functions, and then is routine to arrive at the conclusion (as in the proof of Theorem~\ref{C:main}). 
	
	
	 Let $\pi_{1}$ and $\pi_2$ be the projections as in the proof of Theorem~\ref{Thm:LimitMeasure}. 
	 By Birkhoff's ergodic theorem we have that $(\pi_{1})_{*}\lambda_{\vec{x}}=\mu_{[T_{1}^{p}],x_1}$ for $\nu$-a.e. $\vec{x}\in X_1\times\cdots\times X_d$. By Theorem~\ref{Thm:LimitMeasure}, $(\pi_{2})_{*}\lambda_{\vec{x}}=\mu_{[T_{2},\dots,T_{d}],x_{\ast}}$ for $\nu$-a.e. $\vec{x}\in X_1\times\cdots\times X_d$. 
	
	Fix a countable dense set of continuous functions $\mathcal{C}_i=\{g_{i,k}: k\in \mathbb{N} \} \subseteq C(X_i)$. We have that there exists a set of full measure $X_1'\subseteq X_1$ such that   $\left(\frac{1}{N}\sum_{n=0}^{N-1}T_{1}^{pn}g_{1,k}(x_1)\right)_N$ converges to $\mathbb{E}(g_{1,k}\vert \mathcal{I}({T^{p}_1}))(x_1)=\int g_{1,k}\; d\mu_{[T^{p}_1],x_1}$ as $N\to\infty$ for every $k\in \mathbb{N}$ and $x_{1}\in X'_1$. Since $\mu_1=\int \mu_{[T^{p}_1],x_{1}}\;d\mu_1(x_{1})$, the same is true for $\mu_{[T_1^{p}],x_1}$-a.e. $y \in X_1$ for $\mu_1$-a.e. $x_1\in X_1$. 
	Let $f_i \in \mathcal{C}_i$, $1\leq i\leq d$ and $\vec{x}\in \pi_1^{-1}(X_1')$.  Applying Lemma \ref{lem:inv1}  we have (as in Theorem~\ref{Thm:LimitMeasure})
		\begin{eqnarray*} \int f_{1}\otimes f_{2}\otimes\dots\otimes f_{d}\; d\lambda_{\vec{x}}
		 & = & \lim_{N\to\infty}\frac{1}{N}\sum_{n=0}^{N-1}\int T_{1}^{pn}f_{1}\otimes f_{2}\otimes\dots\otimes f_{d}\; d\lambda_{\vec{x}}\\
		 & = &  \int \left (\int f_1\;d\mu_{[T^{p}_1],x_1} \right) \otimes f_{2}\otimes\dots\otimes f_{d}\; d\lambda_{\vec{x}}.
		 \end{eqnarray*}
	Remark that  the function $\left (\int f_1\;d\mu_{[T^{p}_1],x_1} \right)$ is constant $\mu_{[T^{p}_1],x_1}$-a.e. and thus continuous $\mu_{[T^{p}_1],x_1}$-a.e. Using again the definition of $\lambda_{\vec{x}}$ and Theorem \ref{Thm:LimitMeasure}, we get that 
	\begin{eqnarray*}
		\int f_{1}\otimes f_{2}\otimes\dots\otimes f_{d}\; d\lambda_{\vec{x}} & = & \left (\int f_1\;d\mu_{[T^{p}_1],x_1} \right)\left (\int f_2\otimes\cdots\otimes f_d\;d\mu_{[T_2,\ldots,T_d],x_{\ast}} \right)
		\\  & = &  \mathbb{E}(f_{1}\vert\mathcal{I}(T^{p}_{1}))(x_1)\mathbb{E}(f_{2}\vert\mathcal{I}(T_{2}))(x_2)\cdot\ldots\cdot\mathbb{E}(f_{d}\vert\mathcal{I}(T_{d}))(x_d).
	\end{eqnarray*}
	By the density of linear combinations of functions  $f_1\otimes \cdots \otimes f_d$, $f_i\in \mathcal{C}_i$ in $C(X_1\times\cdots\times X_d)$, we get that $\lambda_{\vec{x}}=\mu_{[T^{p}_1,T_2,\ldots,T_d],\vec{x}}$.   
	
	\textbf{(ii) Case where $a_1(n)=pn+\ell,$ $ p\in\mathbb{Z}\setminus\{0\}$.}
	Using Case (i), we have
	\begin{equation}\nonumber
	\begin{split}
	   &\quad\int f_{1}\otimes f_{2}\otimes\dots\otimes f_{d}\; d\lambda_{x}
	   \\&=\lim_{N\to\infty}\frac{1}{N}\sum_{n=0}^{N-1}T_{1}^{[pn+\ell]}f_{1}(x_1) T_{2}^{[a_2(n)]}f_{2}(x_2)\cdot\ldots\cdot T_{d}^{[a_d(n)]}f_{d}(x_d)
	   \\&=\lim_{N\to\infty}\frac{1}{N}\sum_{n=0}^{N-1}T_{1}^{pn}\Big(T_{1}^{[\ell]}f_{1}\Big)(x_1) T_{2}^{[a_2(n)]}f_{2}(x_2)\cdot\ldots\cdot T_{d}^{[a_d(n)]}f_{d}(x_d)
	   \\&=\mathbb{E}\Big(T_{1}^{[\ell]}f_{1}\vert\mathcal{I}(T^{p}_{1})\Big)(x_1)\mathbb{E}(f_{2}\vert\mathcal{I}(T_{2}))(x_2)\cdot\ldots\cdot\mathbb{E}(f_{d}\vert\mathcal{I}(T_{d}))(x_d).
	\end{split}
	\end{equation}
	
	\textbf{(iii) Case where $a_1(n)=kn+\ell,$ $ k=p/q\in\mathbb{Q}\setminus\{0\}$.} 	Using Case (ii), we have
	\begin{equation}\nonumber
		\begin{split}
		&\quad\int f_{1}\otimes f_{2}\otimes\dots\otimes f_{d}\; d\lambda_{\vec{x}}
		\\&=\lim_{N\to\infty}\frac{1}{N}\sum_{n=0}^{N-1}T_{1}^{[kn+\ell]}f_{1}(x_1) T_{2}^{[a_2(n)]}f_{2}(x_2)\cdot\ldots\cdot T_{d}^{[a_d(n)]}f_{d}(x_d)
		\\&=\frac{1}{q}\sum_{j=0}^{q-1}\lim_{N\to\infty}\frac{1}{N}\sum_{n=0}^{N-1}T_{1}^{[k(qn+j)+\ell]}f_{1}(x_1) T_{2}^{[a_2(qn+j)]}f_{2}(x_2)\cdot\ldots\cdot T_{d}^{[a_d(qn+j)]}f_{d}(x_d)
		\\&=\frac{1}{q}\sum_{j=0}^{q-1}\mathbb{E}\Big(T_{1}^{[\frac{pj}{q}+\ell]}f_{1}\vert\mathcal{I}(T^{p}_{1})\Big)(x_1)\mathbb{E}(f_{2}\vert\mathcal{I}(T_{2}))(x_2)\cdot\ldots\cdot\mathbb{E}(f_{d}\vert\mathcal{I}(T_{d}))(x_d).
		\end{split}
	\end{equation}
	

\textbf{(iv) Case where $a_1(n)=\gamma n+\ell,$ $\gamma\in \mathbb{R} \setminus \mathbb{Q}$} (recall the notations of Corollary~\ref{Cor:irequ}).
By Case (i), and by passing to the mapping torus extension, we have 
	\begin{equation}\nonumber
	\begin{split}
	&\quad\int f_{1}\otimes f_{2}\otimes\dots\otimes f_{d}\; d\lambda_{x}
	\\&=\lim_{N\to\infty}\frac{1}{N}\sum_{n=0}^{N-1}T_{1}^{[\gamma n+\ell]}f_{1}(x_1) T_{2}^{[a_2(n)]}f_{2}(x_2)\cdot\ldots\cdot T_{d}^{[a_d(n)]}f_{d}(x_d)
	\\&=\lim_{N\to\infty}\frac{1}{N}\sum_{n=0}^{N-1}S^{n}\tilde{f}_{1}\Big(\{\ell\},T^{[\ell]}x_1\Big) T_{2}^{[a_2(n)]}f_{2}(x_2)\cdot\ldots\cdot T_{d}^{[a_d(n)]}f_{d}(x_d)
	\\&=\mathbb{E}\Big(\tilde{f}_{1}\vert\mathcal{I}(S)\Big)\Big(\{\ell\},T^{[\ell]}x_1\Big)\mathbb{E}(f_{2}\vert\mathcal{I}(T_{2}))(x_2)\cdot\ldots\cdot\mathbb{E}(f_{d}\vert\mathcal{I}(T_{d}))(x_d).
	\end{split}
	\end{equation}	
	By Corollary \ref{Cor:irequ}, the result follows.
\end{proof}

 \begin{proof}[Proof of Corollary~\ref{C:main1}] It follows by Theorems~\ref{C:main} and ~\ref{Thm:Linear}.
\end{proof}


\begin{thebibliography}{9999}

\bibitem{Aus} T.~Austin. On the norm convergence of nonconventional ergodic averages. \textit{Ergodic Theory Dynam. Systems} {\bf 30} (2010), no. 2, 321--338.

\bibitem{Be87a} V.~Bergelson. Weakly mixing PET. {\em Ergodic Theory
	Dynam. Systems} \textbf{7} (1987), no. 3, 337--349.

\bibitem{BK} V.~Bergelson and I.~H\r{a}land-Knutson. Weakly mixing implies mixing of higher orders along tempered functions. {\em Ergodic Theory
	Dynam. Systems} \textbf{29} (2009), no. 5, 1375--1416.


\bibitem{BL} V.~Bergelson and A.~Leibman. A nilpotent Roth theorem. {\em Invent. Math.} \textbf{147} (2002), 429--470.

\bibitem{BKQW} M.~Boshernitzan, G.~Koles, A.~Quass and M.~Weirdl. Ergodic averaging sequences. {\em J. Anal. Math.} \textbf{95} (2005), 63--103.

\bibitem{Bo} J.~Bourgain. Double recurrence and almost sure convergence. \textit{ J. Reine Angew. Math.} {\bf 404}
(1990), 140--161.

\bibitem{Bo2} J.~Bourgain. Pointwise ergodic theorems for arithmetic sets. \textit{Publications Mathematiques, Institut des Hautes Etudes Scientifiques.} {\bf 69}  (1989), 5--45.


\bibitem{CFH} Q.~Chu, N.~Frantzikinakis and  B.~Host. Ergodic averages of commuting transformations with distinct degree polynomial iterates. {\em Proc. of the London Math. Society.} (3), \textbf{102} (2011), 801--842.

\bibitem{DS0} S.~Donoso and W.~Sun. Pointwise multiple averages for systems with two commuting transformations, to appear {\em  Ergodic Theory Dynam. Systems}.

\bibitem{DS} S.~Donoso and W.~Sun. Pointwise convergence of some multiple ergodic averages,  arXiv:1609.02529. 


\bibitem{F2} N.~Frantzikinakis.
Multiple recurrence and convergence for Hardy field sequences of polynomial growth. 
{\em J. d'Analyse Math.} \textbf{112} (2010), 79--135.   

\bibitem{F3} N.~Frantzikinakis.
A multidimensional Szemeredi theorem for Hardy sequences of polynomial growth. 
{\em Tran. of the A. M. S.} \textbf{367}, no. 8, (2015), 5653--5692.


\bibitem{Fu}
H.~Furstenberg. Ergodic behavior of diagonal measures and a theorem of Szemer\'{e}di on arithmetic progressions. {\em J. Analyse Math.} \textbf{31} (1977), 204--256.

\bibitem{GHSY} Y.~Gutman, W.~Huang, S.~Shao and X.~Ye.  Almost sure convergence of the multiple ergodic average for certain weakly mixing systems. https://arxiv.org/abs/1612.02873. 


\bibitem{HK05} B.~Host and B.~Kra. Nonconventional averages and
nilmanifolds. \textit{Ann. of Math. (2)} {\bf 161} (2005), no. 1, 398--488.


\bibitem{H} B.~Host, Ergodic seminorms for commuting transformations and applications. \textit{ Studia Math. } {\bf 195 } (2009), no. 1, 31--49.



\bibitem{K} A.~Koutsogiannis. Integer part polynomial correlation sequences. {\em Ergodic
	Theory Dynam. Systems} 1--18. doi:10.1017/etds.2016.67.

\bibitem{HSY} W.~Huang, S.~Shao, X.~Ye. Pointwise convergence of multiple ergodic averages and strictly ergodic models.  arXiv:1406.5930.

\bibitem{Tao} T.~Tao. Norm convergence of multiple ergodic averages for commuting transformations. \textit{Ergodic Theory and Dynamical Systems} {\bf 28 } (2008), no. 2, 657--688.

\bibitem{Walsh12} M.~Walsh. Norm convergence of nilpotent ergodic averages. {\em Annals of Mathematics} \textbf{175} (2012), no. 3, 1667--1688.

\end{thebibliography}
\end{document}